\documentclass[a4paper,UKenglish,cleveref,autoref,thm-restate]{lipics-v2021}

\usepackage{float,cite,color,lineno}
\usepackage{tikz,pgfplots}
\usepackage{thmtools,thm-restate}
\usepackage{enumitem}
\usepackage{microtype}
\pgfplotsset{compat=1.17}

\tikzset{
	dot/.style = {circle, fill, minimum size=#1,
		inner sep=0pt, outer sep=0pt},
	dot/.default = 5pt
}
\usetikzlibrary{math}

\usepackage{regexpatch}
\makeatletter
\xpatchcmd\thmt@restatable{%
\csname #2\@xa\endcsname\ifx\@nx#1\@nx\else[{#1}]\fi
}{%
\ifthmt@thisistheone
\csname #2\@xa\endcsname\ifx\@nx#1\@nx\else[{#1}]\fi
\else
\csname #2\@xa\endcsname[{\bf\sffamily restated}]
\fi}{}{}
\makeatother

\newcommand{\NP}{{\sf NP}}

\newcommand{\ssi}{\subseteq_i}

\title{Determining the Complexity of Chromatic Sum in Classes Defined by a Set of Forbidden Graphs} 
\titlerunning{Chromatic Sum in Classes Defined by a Set of Forbidden Graphs}

\author{Cl\'ement Dallard}{Department of Informatics, University of Fribourg, Fribourg, Switzerland}{clement.dallard@unifr.ch}{https://orcid.org/0000-0002-9522-3770}{}
\author{Dani\"el Paulusma}{Department of Computer Science, Durham University, Durham, United Kingdom}{daniel.paulusma@durham.ac.uk}{0000-0001-5945-9287}{Supported by the Leverhulme Trust Grant RPG-2024-182.}
\author{Erik Jan van Leeuwen}{Department of Information and Computing Sciences, Utrecht University, Utrecht, The Netherlands}{e.j.vanleeuwen@uu.nl}{0000-0001-5240-7257}{}

\authorrunning{C.\ Dallard, D.\ Paulusma, and E.J.\ van Leeuwen}

\Copyright{Cl\'ement Dallard, Dani\"el Paulusma and Erik Jan van Leeuwen}

\ccsdesc[500]{Mathematics of computing~Graph theory}
\ccsdesc[500]{Theory of computation~Graph algorithms analysis}
\ccsdesc[500]{Theory of computation~Problems, reductions and completeness}

\keywords{complexity dichotomy, graph colouring, induced subgraph, subgraph}

\category{}
\relatedversion{}
\acknowledgements{The authors wish to thank Hans Bodlaender and Danny Hermelin for their helpful ideas towards the proofs of Theorem~\ref{t-c3} and~\ref{t-cw}.}

\hideLIPIcs  

\EventEditors{John Q. Open and Joan R. Access}
\EventNoEds{2}
\EventLongTitle{42nd Conference on Very Important Topics (CVIT 2016)}
\EventShortTitle{CVIT 2016}
\EventAcronym{CVIT}
\EventYear{2016}
\EventDate{December 24--27, 2016}
\EventLocation{Little Whinging, United Kingdom}
\EventLogo{}
\SeriesVolume{42}
\ArticleNo{23}

\makeatletter
\let\linenumbers\nolinenumbers

\nolinenumbers
\makeatother

\begin{document}
\maketitle

\begin{abstract}
The {\sc Chromatic Sum} problem asks, given a graph $G$ and an integer $k$, whether $G$ admits a colouring $c$ with sum $\sum_{v\in V}c(v) \leq k$. We study the complexity of {\sc Chromatic Sum} on graph classes defined by some set of forbidden graphs. First, we show that three known frameworks fully classify the complexity of {\sc Chromatic Sum} on ${\cal H}$-minor-free graphs and ${\cal H}$-topological-minor-free graphs for any set of graphs~${\cal H}$, and on ${\cal H}$-subgraph-free graphs for any finite set of graphs~${\cal H}$. To show this, we prove a new \NP-completeness result for {\sc Chromatic Sum} on certain subdivisions of planar subcubic graphs. Next, we consider other containment relations. We formalise a novel framework of problems that are \NP-complete for planar graphs as well as for graphs of bounded independence number. For every problem in this framework, we obtain an almost complete complexity classification on $H$-induced-minor-free graphs, $H$-induced-topological-minor-free graphs, and $H$-free graphs for every graph~$H$. We show that {\sc Chromatic Sum} belongs to this framework, as do several other problems. We also define a more fine-grained framework for the induced subgraph relation. We apply this to obtain a complete complexity classification for {\sc Chromatic Sum} on $H$-free graphs, as well as for several other problems. We justify the choice of this framework by proving that {\sc Chromatic Sum} is \NP-complete for graphs of clique-width at most~$3$. This result complements a known polynomial-time result for graphs of clique-width at most~$2$.
\end{abstract}

\section{Introduction}\label{s-intro}

Let $G=(V,E)$ be a graph.
A function $c:V\to \{1,2,\ldots\}$ is a {\it colouring} of $G$ if $c(u) \neq c(v)$ for every edge $uv\in E$.
The {\it sum} of $c$ is defined as $\sum_{v\in V}c(v)$. The {\it chromatic sum} of $G$ is the minimum sum over all colourings of $G$.
The {\sc Chromatic Sum} (or {\sc Sum Colouring}) problem, formulated as a decision problem, asks whether, given a graph $G$ and integer~$k$, the graph $G$ admits a colouring with sum at most~$k$. The {\sc Chromatic Sum} problem is motivated by applications in VLSI design~\cite{NicolosoSS99,Szkaliczki99,Supowit87} and scheduling~\cite{Bar-NoyBHST98,HalldorssonKS03}.

The study of {\sc Chromatic Sum} in the past decades has yielded various insights into its complexity, see e.g.~\cite{Bar-NoyBHST98,Bar-NoyK98,BodlaenderHL25,BonomoV09,BorodinIYZ12,Gavril72,GandhiHKS08,HalldorssonK02,Ja96,Ja97,Kubicka04,KubickaS89,Ma01,MalafiejskiGJK04,Marx05,NicolosoSS99,Salavatipour00,Salavatipour2003,Szkaliczki99,Supowit87}. Notably, {\sc Chromatic Sum} is \NP-complete~\cite{KubickaS89}, even on planar graphs~\cite{Ma01,HalldorssonK02} (even if they are bipartite and of maximum degree~$5$~\cite{MalafiejskiGJK04}, and even if they are cubic~\cite{Ma01}), interval graphs~\cite{Szkaliczki99,Marx05}, split graphs~\cite{Salavatipour00,Salavatipour2003}, penny graphs, unit square graphs, and unit disk graphs~\cite{BorodinIYZ12}. Yet, it is polynomial-time solvable on trees~\cite{KubickaS89}, graphs of bounded treewidth~\cite{Ja97}, proper interval graphs~\cite{NicolosoSS99}, $P_4$-free graphs~\cite{Ja97}, and several generalizations of $P_4$-free graphs~\cite{BonomoV09,Salavatipour2003}.

We consider the complexity of {\sc Chromatic Sum} for graph classes defined by a set of forbidden graphs. The way these graphs are not contained in the input graph is determined by a containment relation. Four prominent graph operations, vertex deletion (VD), edge deletion (ED), edge contraction (EC), and vertex dissolution (VDi), lead to ten non-trivial, well-known containment relations, summarized in Table~\ref{t-survey}. 
If a graph $G$ does not contain a graph $H$ as an induced minor, then $G$ is {\it $H$-induced-minor-free}. If ${\cal H}$ is a set of graphs, then $G$ is {\it ${\cal H}$-induced-minor-free} if $G$ is $H$-induced-minor-free for every $H\in {\cal H}$. If ${\cal H}=\{H_1,\ldots,H_p\}$ for some $p\geq 1$, then we may say that $G$ is {\it $(H_1,\ldots,H_p)$-induced-minor-free} instead. We define the same notion of freeness for the other nine containment relations from Table~\ref{t-survey}, except that we say that a graph is {\it $H$-free}, {\it ${\cal H}$-free}, or {\it $(H_1,\ldots,H_p)$-free} instead of $H$-induced-subgraph-free, ${\cal H}$-induced-subgraph-free, or $(H_1,\ldots,H_p)$-induced-subgraph-free, respectively.

Known results yield only partial insights into the complexity of {\sc Chromatic Sum} on classes defined by a set of forbidden graphs. Three widely applicable meta-classifications for graph problems on ${\cal H}$-minor-free graphs~\cite{RS86}, on ${\cal H}$-topological-minor-free graphs~\cite{RS86}, on ${\cal H}$-subgraph-free graphs~\cite{JMOPPSL25}, and on ${\cal H}$-free graphs~\cite{LR22,JMOPPSL25} may help to determine the complexity of {\sc Chromatic Sum} on those classes \emph{fully}. For the other classes determined by the containment relations in Table~\ref{t-survey}, such frameworks are currently lacking.

\begin{table}[h]
    \begin{center}
        \begin{small}
            \begin{tabular}{|l|c|c|c|c|l|}
                \hline
                \; Containment Relation         & \; VD \; & \; ED \; & \;  EC \; & \; VDi\\ 
                \hline
                \; minor                        & yes      & yes      & yes       & yes\\  
                \hline
                \; induced minor                & yes      & no       & yes       & yes\\  
                \hline
                \; topological minor            & yes      & yes      & no        & yes\\ 
                \hline
                \; induced topological minor \; & yes      & no       & no        & yes\\ 
                \hline
                \; contraction                  & no       & no       & yes       & yes\\
                \hline
                \; dissolution                  & no       & no       & no        & yes\\ 
                \hline
                \; subgraph                     & yes      & yes      & no        & no \\ 
                \hline
                \; induced subgraph             & yes      & no       & no        & no\\ 
                \hline
                \; spanning subgraph            & no       & yes      & no        & no\\ 
                \hline
                \; isomorphism                  & no       & no       & no        & no\\
                \hline
            \end{tabular}
        \end{small}
    \end{center}
    \caption{The ten graph containment relations in terms of their allowed graph operations. Two combinations are missing: ``no yes yes yes'', and  ``no yes no yes''. These  correspond to the minor and topological minor relations, respectively, if we allow a new operation that removes isolated vertices.
    We refer to Section~\ref{s-pre} for formal definitions of the graph operations.
    }\label{t-survey}
\end{table} 

\subparagraph*{Our Results}
We perform a systematic study into the complexity of {\sc Chromatic Sum} on graph classes defined by a set of forbidden graphs. This yields complete or almost-complete complexity classifications of {\sc Chromatic Sum} on all ten graph classes determined by the containment relations in Table~\ref{t-survey}. Our main conceptual contribution is the formalisation of two new complexity frameworks for $H$-free graphs, $H$-contraction-free graphs, $H$-induced-minor-free graphs, and $H$-induced-topological-minor-free graphs, which have wider applications.

We first summarize our results for {\sc Chromatic Sum}. A {\it subdivided claw} 
is obtained from the {\it claw} (the $4$-vertex star) after subdividing zero or more edges. The set ${\cal S}$ consists of all
disjoint unions of 
subdivided claws and paths.
The graphs $P_4$ and $P_1+P_3$ denote the 4-vertex path and the graph obtained from a $3$-vertex path and an isolated vertex, respectively.

\begin{restatable}{theorem}{SummaryThm}\label{t-summary}
    The following statements hold:
    \begin{enumerate}[label=(\roman*)]
        \item\label{t-summary-iso} For any finite set of graphs~${\cal H}$, {\sc Chromatic Sum} for ${\cal H}$-isomorphism-free graphs, ${\cal H}$-spanning-subgraph-free graphs, and ${\cal H}$-dissolution-free graphs is \NP-complete.
        \item\label{t-summary-minor} For a set of graphs~${\cal H}$, {\sc Chromatic Sum} for ${\cal H}$-minor-free graphs is polynomial-time solvable if ${\cal H}$ contains a planar graph, and is \NP-complete otherwise.
        \item\label{t-summary-topo} For a set of graphs~${\cal H}$, {\sc Chromatic Sum} for ${\cal H}$-topological-minor-free graphs is polynomial-time solvable if ${\cal H}$ contains a subcubic planar graph, and is \NP-complete otherwise.
        \item\label{t-summary-sub} For a finite set of graphs~${\cal H}$, {\sc Chromatic Sum} for ${\cal H}$-subgraph-free graphs is polynomial-time solvable if ${\cal H}$ contains a graph from ${\cal S}$, and \NP-complete otherwise.
        \item\label{t-summary-almost} For all but a finite number of graphs~$H$, {\sc Chromatic Sum} is \NP-complete on
        $H$-contraction-free graphs, $H$-induced-minor-free graphs, and $H$-induced-topological-minor-free graphs.
        \item\label{t-summary-h} For a graph~$H$, {\sc Chromatic Sum} for $H$-free graphs is polynomial-time solvable if $H$ is an induced subgraph of $P_4$ or $P_3+P_1$, and is \NP-complete otherwise.
    \end{enumerate}
\end{restatable}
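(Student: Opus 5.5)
The plan is to prove each item separately, reducing every polynomial case to bounded treewidth or bounded clique-width and every hard case to known \NP-completeness results.

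\textbf{Item \ref{t-summary-iso}.} A finite ${\cal H}$ forbids only graphs of bounded size under isomorphism and spanning subgraph. For dissolution, every graph in ${\cal H}$ has at most $m$ edges, so a graph with more edges cannot dissolve to any member. Padding a hard instance with a large disjoint edge-rich gadget whose optimal colour sum is known therefore preserves \NP-completeness. A suitable gadget is many disjoint triangles, or a large cubic planar component; any gadget with a large minimum degree works, since degree at least $3$ prevents dissolution. The base case is \NP-completeness on cubic planar graphs~\cite{Ma01}.

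\textbf{Items \ref{t-summary-minor} and \ref{t-summary-topo}.} These follow from the Robertson--Seymour framework. If ${\cal H}$ contains a planar graph (respectively a subcubic planar graph), the class has bounded treewidth, since a subcubic planar graph is a topological minor of a large enough grid-like wall. Then~\cite{Ja97} gives a polynomial algorithm. Otherwise every planar graph (respectively every planar subcubic graph) belongs to the class, and hardness on cubic planar graphs~\cite{Ma01} applies.

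\textbf{Item \ref{t-summary-sub}.} I would invoke the ${\cal H}$-subgraph-free framework of~\cite{JMOPPSL25}. Its three conditions are: polynomial on bounded treewidth (known), \NP-complete on subcubic graphs (cubic planar, known), and \NP-complete under edge subdivision, i.e.\ on $k$-subdivisions of subcubic graphs for every $k$. The last condition is the main obstacle. I would prove that for each $k$ the problem stays \NP-complete on $2k$-subdivisions of planar subcubic graphs. The reduction would come from a hard problem on planar subcubic graphs, for instance {\sc Independent Set} or a colouring variant. Each edge becomes a long path, and I would argue that an optimal sum colouring uses only colours $1,2,3$. The cost of a path with prescribed endpoint colours is then computable exactly, so the total sum encodes whether an independent set of size $t$ exists: endpoints coloured $1$ are independent, and adjacent $1$-endpoints force extra cost on the path. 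I expect the tuning of the per-path costs to be the delicate part.

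\textbf{Items \ref{t-summary-almost} and \ref{t-summary-h}.} These come from the new frameworks. For \ref{t-summary-almost}, I need \NP-completeness on planar graphs and on graphs of bounded independence number. Bounded independence number follows from the split-graph result~\cite{Salavatipour00}, since split graphs have $\alpha\le$ clique cover and are co-bipartite-like. Alternatively it follows from complements of triangle-free graphs via a reduction from colouring co-triangle-free graphs, i.e.\ matching-like structure. Then any $H$ that is both non-planar and contains a co-planar obstruction yields hardness. This leaves finitely many $H$: by Ramsey arguments, a graph that is planar-witnessing and of small independence number is small.

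For \ref{t-summary-h}, hardness holds if $H$ contains a cycle (bounded girth, via subdivisions), a claw (line graphs; I would need hardness on line graphs, i.e.\ edge sum colouring), or $3P_1$/$2P_2$ (split graphs are $2P_2$-free, and bounded independence covers $3P_1$). The graphs avoiding all of these are induced subgraphs of $P_4$ or $P_1+P_3$. Polynomiality follows for $P_4$ from~\cite{Ja97}, and for $P_1+P_3$ by a complement argument: the complement is paw-free, so each component is triangle-free or complete multipartite, which gives bounded clique-width. The claw case (line graphs) is the step I consider most uncertain.
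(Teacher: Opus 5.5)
Your proposal has genuine gaps in items (v), (vi) and (iv).

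\textbf{Items (v) and (vi).} These rest on a wrong hardness source and a wrong tractability argument.
\begin{itemize}
\item Split graphs do not have bounded independence number (the star $K_{1,n}$ is split). So \NP-completeness on split graphs does not give condition C6.
\item Your alternative via complements of triangle-free graphs cannot work. In a $3P_1$-free graph every colour class has at most two vertices, so an optimal colouring comes from a maximum matching in the complement, and {\sc Chromatic Sum} is polynomial there.
\item For the same reason, your claim in (vi) that $3P_1\ssi H$ forces hardness is false. It contradicts your own polynomial case $H=P_1+P_3$.
\item What is actually needed is hardness on $4P_1$-free and on $(2P_1+P_2)$-free graphs; split graphs do correctly handle $2P_2$. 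The paper gets both, and C6 with them, from Theorem~\ref{t-kr}. It starts from {\sc Colouring} on $(C_5,4P_1,2P_1+P_2,2P_2)$-free graphs~\cite{KKTW01} and joins a clique of size $p=n(n+1)/2$ to $G$. None of the forbidden graphs has a universal vertex, so freeness is preserved. The budget is $p+\sum_{i=1}^{p}(k+i)$. Using more than $k$ colours on $G$ pushes all $p$ clique colours up by one, an extra cost of $p$ that no saving on $G$ can offset.
\item Your tractability argument for $(P_1+P_3)$-free graphs also fails. This class does not have bounded clique-width, since it contains all complements of triangle-free graphs, e.g.\ of grids. Bounded clique-width would not suffice anyway: {\sc Chromatic Sum} is \NP-complete for clique-width~$3$ (Theorem~\ref{t-cw}).
\item The paper instead treats each co-component directly. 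A $3P_1$-free co-component is solved by a maximum matching in its complement, and a disjoint union of cliques is trivial. It then combines co-components, which receive pairwise disjoint colour sets.
\end{itemize}
Two smaller points: the line-graph case you were unsure about is cited from~\cite{Bar-NoyBHST98}, and in (v) the hard cases are $H$ non-planar \emph{or} not $sP_1$-free, not both.

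\textbf{Item (iv).} Condition C3 is the paper's main new ingredient, and the reduction you sketch does not work as described. Suppose an edge $uv$ is merely replaced by a path with $2\ell$ internal vertices. The cheapest colouring of those vertices costs $3\ell$, unless $c(u)=c(v)=a\in\{1,2\}$, in which case it costs $3\ell+3-a$. The optimum is therefore $3\ell m+\min_c\bigl(\sum_{v\in V(G)}c(v)+2e(V_1)+e(V_2)\bigr)$, where $e(V_a)$ counts edges of $G$ with both ends coloured $a$. So edges with both ends coloured $2$ are penalised too, while colour $3$ escapes all penalties. The optimum is then not $2n-\alpha(G)$ plus an offset depending only on $n,m,\ell$: the bracketed minimum is $9$ for $K_{3,3}$ but $12$ for the triangular prism, whereas $2n-\alpha$ is $9$ and $10$.

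The paper's fix is a gadget. It first replaces each edge $uv$ by the gadget with edges $up,pq,qv,pr,qr,rs$ (a triangle $pqr$ with a pendant $s$), and only then subdivides every edge $2\ell$ times. The subdivided triangle is an odd cycle, forcing a colour at least $3$ somewhere. One checks that the cheapest colouring of each gadget's internal vertices costs $18\ell+7$ whatever the colours of $u$ and $v$, except $18\ell+8$ when $c(u)=c(v)=1$. This yields the budget $(18\ell+7)m+2n-k$.

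\textbf{Item (i), minor.} Your remark that a graph with more edges cannot dissolve to a member of ${\cal H}$ is false, since subdivisions only add edges. Your minimum-degree padding argument is nevertheless correct.
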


\noindent
We discuss the ideas of the proof of Theorem~\ref{t-summary}; a formal argument appears in Section~\ref{s-summary-proof}.
The frameworks that underpin the statements~\ref{t-summary-iso}-\ref{t-summary-sub} are described in Section~\ref{s-overview}. In essence, statement~\ref{t-summary-iso} follows directly by the \NP-completeness of {\sc Chromatic Sum}~\cite{KubickaS89}.
Statements~\ref{t-summary-minor} and~\ref{t-summary-topo} follow by combining a meta-classification for ${\cal H}$-minor-free and ${\cal H}$-topological-minor-free graphs~\cite{RS86,JMOPPSL25} with the polynomial-time algorithm for {\sc Chromatic Sum} on graphs of bounded treewidth~\cite{Ja97} and its \NP-completeness on planar (cubic) graphs~\cite{Ma01,HalldorssonK02,MalafiejskiGJK04} (see also~\cite{GJKM02} and Section~\ref{s-1}).
For statement~\ref{t-summary-sub}, we apply a known meta-classification for ${\cal H}$-subgraph-free graphs~\cite{JMOPPSL25}.
To that end, we combine the known polynomial-time algorithm for {\sc Chromatic Sum} on graphs of bounded treewidth~\cite{Ja97} with a novel \NP-completeness result for {\sc Chromatic Sum}, on certain subdivisions of planar subcubic graphs (see Section~\ref{s-1}).

Statements~\ref{t-summary-almost} and~\ref{t-summary-h} rely on our newly formalised complexity frameworks.
Let the {\it independence number} of a graph $G$ be the size of a largest independent set in $G$. A class of graphs ${\cal G}$ has {\it bounded} independence number if there exists a constant~$c$ such that every $G\in {\cal G}$ has independence number at most~$c$. Consider the following conditions\footnote{The conditions are numbered this way to be consistent with existing literature, discussed in Section~\ref{s-overview}.} for a problem~$\Pi$:

\begin{description}[font=\bfseries,labelwidth=2em]
    \item [{\bf C4.}] $\Pi$ is \NP-complete for the class of planar graphs;
    \item [{\bf C6.}] $\Pi$ is \NP-complete for some class of graphs of bounded independence number.
\end{description}
A problem is a \emph{C46-problem} if it meets both condition C4 and C6.

\begin{restatable}{theorem}{AlmostThm}\label{t-almost}
    For all but a finite number of graphs~$H$, every C46-problem is \NP-complete on $H$-free graphs, $H$-contraction-free graphs, $H$-induced-minor-free graphs, and $H$-induced-topological-minor-free graphs. Moreover, in each of the open cases, $H$ is a planar $sP_1$-free graph for some $s\geq 1$.
\end{restatable}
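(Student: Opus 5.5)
The plan is to combine the two hardness conditions and exploit the fact that the four containment relations, restricted to a single forbidden graph~$H$, all yield classes that contain either all planar graphs or all graphs of bounded independence number, except when $H$ is itself both planar and of bounded independence number. Throughout, let $\Pi$ be a C46-problem. Let ${\cal G}_6$ be the class from C6, with independence number at most some constant~$c$. We will show that $\Pi$ is \NP-complete on the relevant class of $H$-free (respectively $H$-contraction-free, $H$-induced-minor-free, $H$-induced-topological-minor-free) graphs whenever $H$ is non-planar or $H$ contains an independent set of size $c+1$. Setting $s=c+1$, the remaining open cases are then exactly planar $sP_1$-free graphs~$H$.

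\textbf{Case 1: $H$ is non-planar.} Planarity is closed under vertex deletion, edge deletion, edge contraction and vertex dissolution. Hence every minor of a planar graph is planar, and so is every contraction, induced minor, induced topological minor and induced subgraph. Therefore every planar graph is $H$-free, $H$-contraction-free, $H$-induced-minor-free and $H$-induced-topological-minor-free. Since the class of planar graphs is contained in each of these classes, C4 gives \NP-completeness.

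\textbf{Case 2: $H$ has independence number at least $c+1$.} We need that every graph $G$ containing $H$ under any of the four relations has independence number at least that of~$H$. For induced subgraphs this is immediate. For the other operations (vertex deletion, edge contraction, vertex dissolution; edge deletions are not allowed in these relations) we argue that $\alpha$ cannot increase when passing from $G$ to the contained graph:
\begin{itemize}
\item an independent set of a contraction lifts to~$G$ by choosing one vertex from each branch set, since non-adjacent branch sets have no edges between them;
\item a dissolution is a special contraction;
\item vertex deletion only shrinks the graph.
\end{itemize}
So if $\alpha(G)\le c<\alpha(H)$, then $G$ does not contain $H$ under any of these relations, and ${\cal G}_6$ lies inside each of the four classes. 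Then C6 gives \NP-completeness. Membership in \NP\ is inherited from $\Pi$ being \NP-complete on some class, so it is in \NP\ in general.

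\textbf{Finiteness.} The exceptional $H$ are planar with $\alpha(H)\le c$. Such a graph has at most $R(5,c+1)-1$ vertices, since planar graphs are $K_5$-free, by Ramsey's theorem. So only finitely many $H$ remain, each planar and $(c+1)P_1$-free.

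The only slightly delicate step is the monotonicity of the independence number under contraction, namely the lifting of an independent set via branch sets; everything else is direct.
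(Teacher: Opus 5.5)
Your proof is correct and follows the paper's argument step for step: planar graphs give hardness for non-planar $H$ via C4, the bounded-independence-number class from C6 gives hardness when $\alpha(H)>c$, and Ramsey's theorem with the $K_5$-freeness of planar graphs bounds the exceptional $H$. You also spell out the branch-set lifting that the paper only asserts. The one blemish is your remark on \NP{} membership. \NP-completeness on a subclass does not place $\Pi$ in \NP{} on larger classes, so membership should be taken as a standing assumption that $\Pi$ is in \NP{}, as the paper implicitly does.
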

We prove that {\sc Chromatic Sum} is a C46-problem. It is known to satisfy C4~\cite{Ma01,HalldorssonK02,MalafiejskiGJK04}. In Section~\ref{s-2}, we prove that {\sc Chromatic Sum} is \NP-complete for $(4P_1,2P_1+P_2,2P_2)$-free graphs. Hence, {\sc Chromatic Sum} satisfies C6 and is a C46-problem. Theorem~\ref{t-almost} may be of independent interest.
Namely, the following problems are also C46-problems, as they are \NP-complete on planar graphs and on co-bipartite graphs, which are $3P_1$-free: {\sc Bandwidth}~\cite{ParraS97,KloksKM99,GareyGJK78,Monien86}, {\sc Metric Dimension}~\cite{EpsteinLW15,DiazPSL17}, {\sc Identifying Code}~\cite{Foucaud15}, {\sc Locating-Dominating Set}~\cite{Foucaud15,Auger10,AugerCHL10},  {\sc Maximum Minimal Separator}~\cite{HanakaKKY21}, and {\sc Pathwidth}, which follows from combining results of \cite{ArnborgCP87,KloksKM99,MonienS88,ParraS97}; see Section~\ref{s-0}. Moreover, {\sc Colouring}~\cite{GSS76,KKTW01}, {\sc Precolouring Extension}~\cite{GSS76,KKTW01}, {\sc Fall Chromatic Number}~\cite{AELMMP26,LM20}, and
{\sc Fall Achromatic Number}~\cite{AELMMP26,LM20} 
are C46-problems, as they are \NP-complete on planar graphs and on $4P_1$-free graphs.

For the induced subgraph relation, we can get a full classification if we strengthen C6 and add two more conditions:

\begin{description}[font=\bfseries,labelwidth=2em]
    \item [{\bf C6'.}] $\Pi$ is \NP-complete for
          $H$-free graphs if $H\in \{C_3,C_4,C_5,4P_1,2P_1+P_2,2P_2\}$;
    \item [{\bf C7.}] $\Pi$ is polynomial-time solvable for $H$-free graphs if $H\in \{P_4,P_1+P_3\}$;
    \item [{\bf C8.}] $\Pi$ is \NP-complete for line graphs.
\end{description}
A problem is a \emph{C6'78-problem} if it meets conditions C6', C7, and C8.

\begin{restatable}{theorem}{FullyThm}\label{t-fully}
    For a graph~$H$, every C6'78-problem on $H$-free graphs is polynomial-time solvable if $H$ is an induced subgraph of $P_1+P_3$ or $P_4$, and \NP-complete otherwise.
\end{restatable}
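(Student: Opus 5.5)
The plan is a case analysis on the structure of $H$, using only conditions C6', C7, C8 and the fact that NP-hardness on $H'$-free graphs transfers to $H$-free graphs whenever $H'$ is an induced subgraph of $H$ (every $H'$-free graph is then $H$-free). Membership in \NP{} is assumed throughout, as is implicit in the conditions. The polynomial side is immediate: if $H$ is an induced subgraph of $P_4$ or of $P_1+P_3$, then every $H$-free graph is $P_4$-free or $(P_1+P_3)$-free, and C7 applies.

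For the hardness side, let $H$ be a graph that is an induced subgraph of neither $P_4$ nor $P_1+P_3$. The steps are as follows.

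\begin{enumerate}
\item If $H$ contains a cycle as an induced subgraph, take a shortest one, $C_\ell$.
  \begin{enumerate}
  \item If $\ell\in\{3,4,5\}$, condition C6' gives \NP-completeness.
  \item If $\ell\geq 6$, then $C_\ell$ contains $2P_2$ as an induced subgraph, so $H$ does too, and C6' for $2P_2$ applies.
  \end{enumerate}
\item Otherwise $H$ is a forest. If $H$ contains an induced claw $K_{1,3}$, then every line graph is $H$-free, because line graphs are claw-free. Condition C8 then gives \NP-completeness.
\item So assume $H$ is a claw-free forest, that is, a linear forest (a disjoint union of paths).
  \begin{enumerate}
  \item If $H$ has at least $4$ vertices in an independent set, it contains $4P_1$ and C6' applies.
  \item If $H$ contains $2P_2$ or $2P_1+P_2$ as an induced subgraph, C6' applies again.
  \end{enumerate}
\item The remaining linear forests have independence number at most $3$ and contain neither $2P_2$ nor $2P_1+P_2$.
  \begin{enumerate}
  \item A path $P_k$ with $k\geq 5$ contains $2P_2$, so every component is a path on at most $4$ vertices.
  \item If there are two components each with an edge, $H$ contains $2P_2$.
  \item If one component has an edge and there are at least two further components, $H$ contains $2P_1+P_2$.
  \end{enumerate}
\item Enumerating what is left gives exactly $P_1,P_2,P_3,P_4,2P_1,3P_1,P_1+P_2,P_1+P_3$, together with $P_1+P_4$. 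The graph $P_1+P_4$ contains $2P_1+P_2$ (the isolated vertex, one end of the $P_4$, and the edge formed by its last two vertices), so it is already covered. Every other graph in the list is an induced subgraph of $P_4$ or of $P_1+P_3$. For example, $3P_1\subseteq_i P_1+P_3$ and $2P_1\subseteq_i P_3$.
\end{enumerate}

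This contradicts the choice of $H$ and completes the dichotomy.

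The main obstacle is getting the final enumeration right. In particular, one must check that disconnected cases such as $P_1+P_4$ and $P_2+P_3$ are indeed absorbed by $2P_1+P_2$ or $2P_2$, so that no graph outside the two polynomial families escapes every hardness condition. I would write this step out as an explicit bound: at most one nontrivial component, and if such a component is present, at most one isolated vertex.
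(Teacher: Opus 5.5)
Your proof is correct and follows essentially the same route as the paper. Induced cycles are handled by C6', directly for $C_3,C_4,C_5$ and via $2P_2$ for longer ones; forests containing a claw are handled by C8, since line graphs are claw-free; and for the remaining linear forests you exclude $4P_1$, $2P_2$ and $2P_1+P_2$ and check that what survives is an induced subgraph of $P_4$ or $P_1+P_3$. The only difference is bookkeeping: the paper splits the linear-forest case by number of components rather than enumerating after excluding these patterns, and your choice of a shortest (hence induced) cycle makes explicit a step the paper leaves implicit.
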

This framework is different from an earlier proposed framework for $H$-free graphs (see Theorem~\ref{t-induced}) and appears (so far) to be more widely applicable.
In particular, it is known that, for example, {\sc Colouring}~\cite{KKTW01},
{\sc Precolouring Extension}~\cite{GPS14},
{\sc Fall Chromatic Number}~\cite{AELMMP26},
{\sc Fall Achromatic Number}~\cite{AELMMP26}, and
{\sc Critical Vertex}~\cite{PPR19}
are all C6'78-problems.
This indicates that Theorem~\ref{t-fully} may also be of independent interest.

We prove that {\sc Chromatic Sum} is also a C6'78-problem. As mentioned, we show that {\sc Chromatic Sum} is \NP-complete for $(4P_1,2P_1+P_2,2P_2)$-free graphs. Our new hardness result for subdivisions of planar subcubic graphs implies that {\sc Chromatic Sum} is \NP-complete for $(C_3,C_4,C_5)$-free graphs, and thus satisfies C6'. We show that {\sc Chromatic Sum} is polynomial-time solvable for $(P_1+P_3)$-free graphs (see Section~\ref{a-c7}). As Jansen~\cite{Ja96} proved the same for $P_4$-free graphs, {\sc Chromatic Sum} satisfies C7. Finally, Bar-Noy et al.~\cite{Bar-NoyBHST98} proved that {\sc Chromatic Sum} satisfies C8. Hence, it is a C6'78-problem.

We note that the proofs of Theorems~\ref{t-almost} and~\ref{t-fully} are straightforward and have already appeared implicitly in the literature. As such, our contribution is primarily conceptual, as it provides a unified framework that enables these results to be cited directly without the need to re-establish their proofs for each individual problem.

Finally, we consider that to cover part of C7, we could have stated the condition that $\Pi$ is polynomial-time solvable for every graph class of bounded clique-width. 
Indeed, $P_4$-free graphs are exactly the graphs of clique-width at most~$2$~\cite{CO00} and {\sc Chromatic Sum} is polynomial on $P_4$-free graphs~\cite{Ja96}. However, {\sc Chromatic Sum} is an exceptional problem:

\begin{theorem}\label{t-cliquewidth}
For an integer~$c$, {\sc Chromatic Sum} for graphs of clique-width at most~$c$ is polynomial-time solvable if $c\leq 2$ and \NP-complete if $c\geq 3$.
\end{theorem}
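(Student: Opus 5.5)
The case $c\leq 2$ is immediate. Graphs of clique-width at most~$2$ are exactly the $P_4$-free graphs~\cite{CO00}, and Jansen~\cite{Ja96} showed that {\sc Chromatic Sum} is polynomial-time solvable on $P_4$-free graphs. Membership in \NP{} is trivial, since a colouring is a certificate. Moreover, clique-width at most~$3$ implies clique-width at most~$c$ for every $c\geq 3$. So the whole content of the statement is \NP-hardness for graphs of clique-width at most~$3$. This is what Theorem~\ref{t-cw} should establish; I would prove it by a reduction and then check the clique-width bound by exhibiting explicit $3$-expressions.

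For the reduction, I would start from a known hard case whose structure already has small clique-width. The most promising source is interval graphs, where {\sc Chromatic Sum} is \NP-complete~\cite{Szkaliczki99,Marx05}; but interval graphs have unbounded clique-width, so they cannot be used directly. Split graphs~\cite{Salavatipour00} have the same problem. I would instead reduce from a strongly \NP-hard numerical or packing problem, for instance {\sc 3-Partition} or a scheduling problem with unit jobs and conflicts, since chromatic sum is essentially minimum-sum scheduling.

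The construction I have in mind is built from cliques, independent sets and their complete joins, arranged as a linear ``caterpillar'' of modules. Each element of size $a_i$ becomes a clique of size $a_i$, or a gadget forcing the colours of a group to form a block. Colour classes, or bins, are simulated by large cliques of prescribed sizes, fully joined to selected parts, so that an optimal colouring must use specific colour ranges. The cost threshold $k$ is chosen so that any colouring achieving it packs the elements exactly into the bins. Cographs (clique-width~$2$) combined with a single linear chain of joins stay within clique-width~$3$: one label holds the ``current'' vertices that still need future joins, one holds finished vertices, and one is used for the freshly created module before it is joined and relabelled. I would design the gadget so that every adjacency goes between a new module and a label class of earlier vertices. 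That is exactly the pattern a $3$-expression can realise: create with label~$3$, apply $\eta_{3,1}$, then $\rho_{1\to 2}$ and $\rho_{3\to 1}$.

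The main obstacle is the correctness of the reduction: proving that an optimal sum colouring cannot ``cheat'' by splitting a gadget across colour ranges in an unintended way. I would handle this with weighting by clique multiplicities. Duplicating vertices as false twins, which keeps the clique-width bound, makes deviations from the intended colour assignment strictly more expensive than the slack in~$k$. The final step is to verify this exchange argument: any colouring of cost at most $k$ can be normalised so that each element gadget occupies a contiguous block inside one bin range. From that normalised colouring, the solution of the source instance can be read off.
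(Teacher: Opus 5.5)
\textbf{Verdict: there is a genuine gap.} Your treatment of $c\leq 2$ via \cite{CO00} and \cite{Ja96} is correct and matches the paper, and so are membership in \NP{} and monotonicity in~$c$. Everything therefore hinges on \NP-hardness for clique-width at most~$3$, and there the proposal is a plan rather than a proof. The source problem is not fixed, and you give no graph, no threshold and no exchange argument. The step you yourself call the main obstacle is only asserted: that every colouring of cost at most~$k$ normalises to contiguous blocks inside bin ranges.

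The tools you name do not supply that step. False-twin duplication only turns the problem into a weighted one; it does not confine any vertex to a colour range. A clique complete to a module only forces the two to use disjoint colour sets. Which of them receives the low colours is decided by the optimisation, so nothing in your sketch pins a module to a prescribed range. Moreover, the only concrete $3$-expression pattern you describe ($\eta_{3,1}$, then $\rho_{1\to 2}$ and $\rho_{3\to 1}$) yields a chain of modules in which each module is complete only to its predecessor and successor. That cannot express bin cliques joined to arbitrary selected parts, so even the clique-width bound of your intended construction is unverified.

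The missing idea is to work in a class whose clique-width bound is already known, and to pin colours with gadgets that stay inside that class. The paper uses distance-hereditary graphs, which have clique-width at most~$3$~\cite{GolumbicR00} and on which {\sc Precolouring Extension} is \NP-complete~\cite{BonomoDM09}. Pendant vertices and true twins preserve distance-heredity, so no expression ever has to be built. The reduction has two steps.
\begin{enumerate}
\item Reduce to {\sc Chromatic Sum with Lower Bounds}. A precoloured~$v$ gets lower bound $c'(v)$. The cap of~$v$ is $c'(v)$ if $v$ is precoloured and $k$ otherwise. For every colour~$i$ above this cap up to $X=|V(G)|k+1$, attach to~$v$ a batch of $X$ pendant vertices with lower bound~$i$. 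Giving $v$ such a colour~$i$ pushes that whole batch up by one. This costs an extra~$X$, which exceeds the slack $|V(G)|k$; a colour beyond~$X$ already exceeds the slack on its own.
\item Remove the lower bounds. Attach to each~$v$ a set of $k+1$ disjoint cliques of size $\ell(v)-1$, each complete to~$v$. If $c(v)<\ell(v)$, each clique must avoid a colour of $\{1,\ldots,\ell(v)-1\}$ and pays at least one extra. This gives an excess of $k+1$, more than the slack~$k$.
\end{enumerate}
Two details of the paper's write-up should be corrected; the argument is otherwise as described. The cap for unprecoloured vertices should read $k$ rather than $|V(G)|$. The budget should account for $\binom{\ell(v)}{2}$ per clique rather than $\ell(v)-1$.
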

To show Theorem~\ref{t-cliquewidth} we prove in Section~\ref{a-distance} that {\sc Chromatic Sum} is \NP-complete for distance-hereditary graphs, which have clique-width at most~$3$~\cite{GolumbicR00}.

Finally, in Section~\ref{s-con} we give some directions for future work. 

\section{Preliminaries}\label{s-pre}

In what follows, we let $G=(V,E)$ denote a graph with vertex set $V$ and edge set $E$, where $E$ consists of unordered pairs of vertices of $V$.
For a set $S\subseteq V$, let $G[S]$ be the subgraph of $G$ induced by $S$. 
We write $G-S=G[V\setminus S]$. For a graph~$F$, we write $F\ssi G$ if $F$ is an induced subgraph of $G$.
For $u\in V$, the  set $N(u)=\{v\; |\; uv\in E\}$ denotes the {\it neighbourhood} of a vertex $u\in V$, and $|N(u)|$ is the {\it degree} of $u$.
If every vertex of $G$ has degree at most~$3$, then $G$ is {\it subcubic}.

The operations {\em vertex deletion} (VD) and {\em edge deletion} (ED) delete a vertex or an edge, respectively, from $G$. The \emph{edge contraction} (EC) operation, applied to an edge $uv$ of $G$, deletes the vertices $u$ and $v$ and replaces them by a new vertex made adjacent to precisely those vertices to which $u$ or $v$ were adjacent (without creating parallel edges). The {\it vertex dissolution} (VDi) operation can only be applied to a vertex $v$ of degree~2 whose two neighbours are not adjacent: it contracts one of the two edges incident with $v$ (which edge is irrelevant).

A set $M\subseteq E$ is a {\it matching} in $G$ if no two distinct edges in $M$ have a common vertex.
A set $K\subseteq V$ of pairwise adjacent vertices is a {\it clique} in $G$, and we say that $G$ is {\it complete} if $V$ is a clique. 
The {\it clique number} $\omega(G)$ of $G$ is the size of a largest clique in $G$.
A set $I\subseteq V$ of pairwise non-adjacent vertices is an  {\it independent set} in  $G$. We say that $G$ is {\it bipartite} if $V=A\cup B$ for two disjoint independent sets $A$ and $B$.
We say that $G$ is {\it complete multi-partite} if $V$ can be partitioned into independent sets $V_1,\ldots,V_r$, for some $r\geq 1$, such that for every $i\neq j$ there exists an edge between every vertex of $V_i$ and every vertex of $V_j$.

Recall that a colouring of $G$ is a mapping $c:V\to \{1,2,\ldots \}$ such that for every $uv\in E$, it holds that $c(u)\neq c(v)$.
We say that $c(u)$ is the {\it colour} of $u$.
The {\it colour classes} of $c$ are the sets of vertices assigned the same colour; they form a partition of $V$ into independent sets. 
The {\it chromatic number} $\chi(G)$ of $G$ is the smallest integer $k$ such that $G$ has a colouring $c:V\to \{1,\ldots,k\}$ that uses $k$ distinct colours.
Even the chromatic number of a tree~$T$ may be lower than the minimum number of colours needed to attain the chromatic sum~$T$~\cite{KubickaS89}.

For two vertex-disjoint graphs $G_1$ and $G_2$, we let $G_1+G_2=(V(G_1)\cup V(G_2),E(G_1)\cup E(G_2))$ denote the {\it disjoint union} of $G_1$ and $G_2$.  We write $rG$ for the disjoint union of $r$ copies of~$G$. A {\it linear forest} is a disjoint union of paths.

The {\it line graph} $L(G)$ of a graph $G$ has as vertices the edges from $E$, and there is an edge between two distinct vertices $e$ and $f$ of $L(G)$ if and only if $e$ and $f$ share a common end-vertex in $G$. It is well known and readily seen that every line graph is claw-free.
The {\it complement} $\overline{G}$ of $G$ is obtained by replacing each edge in $G$ with a non-edge, and vice versa.
A {\it co-component} of $G$ is a connected component of $\overline{G}$.
A graph is {\it co-bipartite} if its complement is bipartite.

Let  $C_r$ ($r\geq 3$), $P_r$ ($r\geq 1$) and $K_r$ ($r\geq 1$) denote the path, cycle and complete graph, respectively, on $r$ vertices.
For $r\geq 1$, let $K_{1,r}$ denote the star on $r+1$ vertices. Note that $K_{1,1}=K_2=P_2$ and $K_{1,2}=P_3$. Recall that the $4$-vertex star $K_{1,3}$ is the claw.

A graph class~${\cal G}$ has {\it bounded} treewidth if there exists a constant~$t$, such that every graph in ${\cal G}$ has treewidth at most~$t$ (we do not define treewidth, as we do not need it in our paper).

\section{Existing Frameworks} \label{s-overview}
We discuss severeal known frameworks for several of the containment relations in Table~\ref{t-survey}. 

\subparagraph{Subgraph Relation}
To explain a known meta-classification for the subgraph relation, we must define some new concepts.
The {\it $k$-subdivision} of a graph~$G$ is the graph obtained from~$G$ after subdividing each edge of $G$ exactly $k$ times.
For a graph class ${\cal G}$ and an integer~$k$, we let ${\cal G}^k$ be the class that consists of all the $k$-subdivisions of the graphs in ${\cal G}$ (in particular, ${\cal G}^0={\cal G}$).
We say that a graph problem $\Pi$ is \NP-complete {\it under edge subdivision of subcubic graphs} if for every integer $j \geq 1$, there exists an integer~$\ell \geq j$ such that the following holds:
if $\Pi$ is \NP-complete for the class ${\cal G}$ of subcubic graphs, then $\Pi$ is \NP-complete for ${\cal G}^{\ell}$.
The subgraph framework from \cite{JMOPPSL25} captures
every {\it C123-problem}, that is, a problem $\Pi$ is C123 if it satisfies the following three conditions:

\begin{description}[font=\bfseries,labelwidth=2em]
    \item [{\bf C1.}] $\Pi$ is polynomial-time solvable for every graph class of bounded treewidth;
    \item [{\bf C2.}] $\Pi$ is \NP-complete for the class of subcubic graphs; and
    \item [{\bf C3.}] $\Pi$ is \NP-complete under edge subdivision  of subcubic graphs.
\end{description}

\noindent
Recall that the set ${\cal S}$ consists of all non-empty disjoint unions of zero or more subdivided claws and paths.
Johnson et al.~\cite{JMOPPSL25} proved the following.

\begin{theorem}[\cite{JMOPPSL25}]\label{t-dicho}
    For a \emph{finite} set ${\cal H}$ of graphs, every C123-problem on ${\cal H}$-subgraph-free graphs is polynomial-time solvable if ${\cal H}$ contains a graph from ${\cal S}$, and \NP-complete otherwise.
\end{theorem}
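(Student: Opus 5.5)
The statement immediately above is Theorem~\ref{t-dicho}, cited from~\cite{JMOPPSL25}. I would prove it by treating the polynomial and hard sides separately, relying on C1 for the first and on C2 and C3 for the second.

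\emph{Polynomial side.} Suppose ${\cal H}$ contains some $H\in{\cal S}$. The key structural fact is that ${\cal H}$-subgraph-free graphs then have bounded treewidth. I would argue this via the grid-minor theorem. A graph of large treewidth contains a large wall as a subgraph. A sufficiently large wall contains, as a subgraph, every fixed disjoint union of paths and subdivided claws: pick far-apart regions of the wall, and in each region route the three legs of a claw from a degree-3 vertex along wall paths of the required lengths. Hence every $H$-subgraph-free graph has treewidth bounded by a function of $|V(H)|$, and C1 gives a polynomial-time algorithm.

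\emph{Hard side.} Suppose no graph of ${\cal H}$ lies in ${\cal S}$. Every $H\in{\cal H}$ then either contains a cycle or has a connected component that is a tree with two vertices of degree at least~3, or with a vertex of degree at least~4.
\begin{itemize}
\item Let $m$ be the maximum number of vertices of a graph in ${\cal H}$; this is finite because ${\cal H}$ is finite.
\item By C2, $\Pi$ is \NP-complete on subcubic graphs.
\item By C3, applied with $j=m$, there is $\ell\geq m$ such that $\Pi$ is \NP-complete on the $\ell$-subdivisions of subcubic graphs.
\end{itemize}
Such an $\ell$-subdivision is subcubic, so it contains no vertex of degree at least~4. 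Its cycles have length greater than $\ell\geq m$, and distinct vertices of degree~3 are at distance greater than $\ell$. Therefore it contains no $H\in{\cal H}$ as a subgraph:
\begin{itemize}
\item a cycle of $H$ would be too short to fit;
\item two degree-$\geq 3$ vertices within a single component of $H$ would lie at distance less than $m$;
\item a degree-$\geq 4$ vertex is impossible in a subcubic graph.
\end{itemize}
So $\Pi$ is \NP-complete on ${\cal H}$-subgraph-free graphs.

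The main obstacle is the wall-embedding step. One must carefully verify that subdivided claws with arbitrarily long legs, together with many disjoint paths, embed as subgraphs (not merely as minors) into a large enough wall. Walls are subcubic and their degree-3 vertices are spaced regularly, so this can be done by a direct construction.
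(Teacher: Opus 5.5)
Your proof is correct and is essentially the standard argument behind this result. The paper itself gives no proof and only cites Johnson et al.; their argument is the same as yours: if ${\cal H}$ contains a graph of ${\cal S}$, the ${\cal H}$-subgraph-free graphs have bounded treewidth and C1 applies, and otherwise C2 and C3 are used with $\ell$ at least the maximum number of vertices of a graph in ${\cal H}$, so that large girth, maximum degree~$3$ and the large distance between degree-$3$ vertices exclude every $H\in{\cal H}$. One small precision: large treewidth guarantees only a \emph{subdivision} of a large wall as a subgraph, not a wall itself. This is harmless, because every subdivision of a graph in ${\cal S}$ contains that graph as a subgraph.
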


\noindent
Johnson et al.~\cite{JMOPPSL25} proved that many problems are in fact C123, including well-known examples like {\sc Independent Set}, {\sc Vertex Cover}, {\sc Treewidth}, and {\sc Steiner Tree}. {\sc Colouring}, however, is not a C123-problem, as it is polynomial-time solvable on cubic graphs due to Brooks' Theorem, and thus it does not satisfy C2. Johnson et al.~\cite{JMPPSV23} consider the complexity of several further problems that do not satisfy C2. See also~\cite{BJMOPPSV25} for problems that do not satisfy C1, and~\cite{LMPPSSV24} for problems that do not satisfy C3.

\subparagraph*{Minor and Topological Minor Relation}
We first define the following two conditions for a problem $\Pi$, where C4' is a strengthening of C4.
\begin{description}[font=\bfseries,labelwidth=2em]
    \item [{\bf C4.}] $\Pi$ is \NP-complete for the class of planar graphs;
    \item [{\bf C4'.}] $\Pi$ is \NP-complete for the class of subcubic planar graphs.
\end{description}
Note that C4' indeed implies C4. A problem is a \emph{C14-problem} or a \emph{C14'}-problem if it satisfies conditions C1 and C4 respectively C1 and C4'. Due to Robertson and Seymour's graph minor theory~\cite{RS86} we have (as observed e.g.\ in~\cite{JMOPPSL25}):

\begin{theorem}[\cite{RS86}]\label{t-dicho2}
    For a set of graphs ${\cal H}$, every C14-problem on ${\cal H}$-minor-free graphs is polynomial-time solvable if ${\cal H}$ contains a planar graph, and \NP-complete otherwise.
\end{theorem}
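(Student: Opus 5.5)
The proof splits into two directions: polynomial-time solvability when ${\cal H}$ contains a planar graph, and \NP-completeness otherwise. Both come from Robertson and Seymour's graph minor theory combined with conditions C1 and C4.

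\emph{Polynomial direction.} Suppose ${\cal H}$ contains a planar graph $H$. Every ${\cal H}$-minor-free graph is then $H$-minor-free. By the Grid Minor Theorem of Robertson and Seymour~\cite{RS86}, every planar graph $H$ is a minor of some $k\times k$ grid, where $k$ depends only on $H$. Moreover, there is a function $f$ such that every graph of treewidth at least $f(k)$ contains the $k\times k$ grid as a minor. Since the minor relation is transitive, every $H$-minor-free graph has treewidth less than $f(k)$. So the class of ${\cal H}$-minor-free graphs is a subclass of a class of bounded treewidth, and condition C1 gives a polynomial-time algorithm. The only subtlety is whether C1 requires a tree decomposition as part of the input. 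It does not matter here: for fixed treewidth bound $t$, a tree decomposition of width at most $t$ (or a constant-factor approximation) can be computed in polynomial time, for instance by Bodlaender's algorithm or the Robertson–Seymour approximation. We read C1 as covering every class of bounded treewidth as stated, so the algorithm applies directly.

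\emph{Hardness direction.} Suppose no graph in ${\cal H}$ is planar. Every minor of a planar graph is planar, because planarity is closed under vertex deletion, edge deletion and edge contraction. Hence a planar graph cannot contain a non-planar graph as a minor. So every planar graph is ${\cal H}$-minor-free, i.e.\ the class of planar graphs is contained in the class of ${\cal H}$-minor-free graphs. By C4 the problem is \NP-complete on planar graphs, so it is \NP-hard on the larger class. Membership in \NP{} comes from membership for the problem in general, which is implicit in C4's assertion of \NP-completeness.

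The main obstacle is the polynomial direction: we must cite the grid-minor theorem in the form ``excluding a planar minor bounds treewidth'' and make sure C1 applies without being handed a decomposition. The hardness direction is immediate from minor-closedness of planarity. Note also that ${\cal H}$ may be infinite. This causes no trouble: the argument uses only a single planar member for the first direction and only closure of planar graphs under minors for the second.
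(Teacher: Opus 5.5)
Your argument is correct and is the standard reasoning behind the paper's citation of Robertson and Seymour. Excluding a planar minor bounds treewidth via the grid-minor theorem, so C1 applies; if ${\cal H}$ has no planar member, then every planar graph is ${\cal H}$-minor-free, so C4 gives \NP-completeness. The paper states the theorem with this citation and gives no separate proof, so your proposal supplies exactly the intended argument.
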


\begin{theorem}[\cite{RS86}]\label{t-dicho3}
    For a set of graphs ${\cal H}$, every C14'-problem on ${\cal H}$-topological-minor-free graphs is polynomial-time solvable if ${\cal H}$ contains a subcubic planar graph, and \NP-complete otherwise.
\end{theorem}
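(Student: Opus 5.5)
The plan is to prove both directions of Theorem~\ref{t-dicho3} from Robertson and Seymour's grid theorem~\cite{RS86} together with two standard facts about topological minors. Let $\Pi$ be a C14'-problem.

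\textbf{Polynomial case.} Suppose ${\cal H}$ contains a subcubic planar graph $H$. I will show that $H$-topological-minor-free graphs, and hence ${\cal H}$-topological-minor-free graphs, have bounded treewidth. Then C1 gives polynomial-time solvability.

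\begin{itemize}
\item Since $H$ is planar, it is a minor of some sufficiently large grid, say the $k\times k$ grid, with $k$ depending only on $H$.
\item Since $H$ has maximum degree at most~$3$, being a minor of a graph is equivalent to being a topological minor of it. This is the classical fact that a minor with maximum degree at most~$3$ can always be realised as a topological minor.
\item By the grid theorem, any graph of sufficiently large treewidth contains the $k\times k$ grid as a minor, and therefore contains $H$ as a minor.
\item By the previous bullet point, it then contains $H$ as a topological minor.
\end{itemize}

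So every $H$-topological-minor-free graph has treewidth bounded by a function of $H$, as required.

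\textbf{Hardness case.} Suppose no graph of ${\cal H}$ is subcubic and planar. I will show that the class of subcubic planar graphs is contained in the class of ${\cal H}$-topological-minor-free graphs. Then C4' gives \NP-completeness on the larger class.

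Let $G$ be subcubic and planar, and suppose $G$ contains some $H\in{\cal H}$ as a topological minor. Then some subdivision of $H$ is a subgraph of $G$.

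\begin{itemize}
\item Subgraphs of planar graphs are planar, so this subdivision is planar. Subdividing edges preserves planarity in both directions, so $H$ is planar.
\item Every branch vertex of the subdivision has degree at least its degree in $H$ and at most its degree in $G$, which is at most~$3$. So $H$ is subcubic.
\end{itemize}

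This contradicts the assumption on ${\cal H}$. Hence $G$ is ${\cal H}$-topological-minor-free.

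\textbf{Main obstacle.} The only non-trivial ingredients are the grid theorem and the equivalence of minors and topological minors for subcubic $H$. I will cite both rather than reprove them. Neither direction needs ${\cal H}$ to be finite, since each argument uses only a single graph $H\in{\cal H}$.
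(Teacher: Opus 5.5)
Your proposal is correct and matches the argument the paper relies on. The paper gives no proof of its own and simply attributes the result to Robertson and Seymour's grid-minor theorem, which together with the fact that a subcubic minor is always a topological minor gives bounded treewidth (so C1 applies), while the closure of subcubic planar graphs under topological minors gives hardness via C4'. You are also right that finiteness of ${\cal H}$ is never used.
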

Johnson et al.~\cite{JMOPPSL25} showed that many problems are also C14, C14', or both. A notable exception is {\sc Max-Cut}, which is polynomial-time solvable on planar graphs~\cite{Hadlock75}.
We briefly note that Theorem~\ref{t-dicho2} and~\ref{t-dicho3} immediately apply to {\sc Chromatic Sum} by known results~\cite{Ja97,Ma01}.

\subparagraph*{Induced Subgraph Relation}
A meta-classification for the induced subgraph relation is also known. To explain this meta-classification, we need to impose a new condition on a problem~$\Pi$ and some more terminology. The {\it line graph} of a graph $G$ has vertex set~$E(G)$, and there exists an edge between two vertices $e_1$ and $e_2$ if and only if $e_1$ and~$e_2$ have a common end-vertex in $G$. Let ${\cal T}$ be the class of line graphs of graphs of ${\cal S}$. Lozin and Razgon~\cite{LR22} proved that for any finite set of graphs~${\cal H}$, the class of ${\cal H}$-free graphs has bounded treewidth if and only if ${\cal H}$ contains a complete graph, a complete bipartite graph, a graph from~${\cal S}$ and a graph from ${\cal T}$.
Moreover, a class ${\cal G}$ has {\it bounded} treewidth if there exists a constant~$c$ such that every $G\in {\cal G}$ has treewidth at most~$c$; otherwise ${\cal G}$ has unbounded treewidth.
We now define the following condition for a problem~$\Pi$:

\begin{description}[font=\bfseries,labelwidth=2em]
    \item [{\bf C5.}] $\Pi$ is \NP-complete for every class of graphs of unbounded treewidth.
\end{description}

\noindent
We note that a C6'78-problem, or, more generally, any problem that satisfies C7 does not satisfy C5, because $P_4$-free graphs (and also $(P_1+P_3)$-free graphs) have unbounded treewidth.

A problem is a {\it C15}-problem if it meets conditions C1 and C5. The treewidth dichotomy of Lozin and Razgon~\cite{LR22} gives us the next theorem for C15-problems, as observed in~\cite{JMOPPSL25}.

\begin{theorem}[\cite{LR22}, see also~\cite{JMOPPSL25}]\label{t-induced}
    For a finite set of graphs~${\cal H}$, every C15-problem on ${\cal H}$-free graphs is polynomial-time solvable if ${\cal H}$ contains a complete graph, a complete bipartite graph, a graph from~${\cal S}$ and a graph from ${\cal T}$, and \NP-complete otherwise.
\end{theorem}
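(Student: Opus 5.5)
The plan is to derive this directly from the treewidth dichotomy of Lozin and Razgon~\cite{LR22} for classes defined by finitely many forbidden induced subgraphs, using C1 for the tractable side and C5 for the hard side. Let ${\cal H}$ be finite and let ${\cal G}$ be the class of ${\cal H}$-free graphs. Membership in \NP{} is implicit in both conditions, so only the two directions need an argument.

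\emph{Polynomial side.} Suppose ${\cal H}$ contains a complete graph, a complete bipartite graph, a graph from ${\cal S}$ and a graph from ${\cal T}$. By~\cite{LR22}, ${\cal G}$ then has treewidth bounded by some constant. Since $\Pi$ satisfies C1, it is polynomial-time solvable on ${\cal G}$. This is immediate.

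\emph{Hardness side.} Suppose one of the four types is missing from ${\cal H}$. By the ``only if'' direction of~\cite{LR22}, ${\cal G}$ has unbounded treewidth, and C5 then gives that $\Pi$ is \NP-complete on ${\cal G}$. To keep the argument self-contained, and to make clear why finiteness of ${\cal H}$ matters, I would also exhibit an explicit family of ${\cal H}$-free graphs of unbounded treewidth in each case.
\begin{itemize}
\item If ${\cal H}$ has no complete graph, use the cliques $K_n$. Every induced subgraph of $K_n$ is complete, so each $K_n$ is ${\cal H}$-free, and $K_n$ has treewidth $n-1$.
\item If ${\cal H}$ has no complete bipartite graph, use the graphs $K_{n,n}$. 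Every induced subgraph of $K_{n,n}$ is complete bipartite, possibly with an empty side (such edgeless graphs count as complete bipartite here, in line with the convention of~\cite{LR22}). The treewidth of $K_{n,n}$ is $n$.
\item If ${\cal H}$ has no graph from ${\cal S}$, use the $k$-subdivisions of $n\times n$ walls, where $k$ exceeds the maximum number of vertices of a graph in ${\cal H}$. Such a graph is subcubic and its degree-$3$ vertices are pairwise at distance more than $k$. Hence every induced subgraph on at most $k$ vertices is a disjoint union of subdivided claws and paths, that is, lies in ${\cal S}$. So each of these graphs is ${\cal H}$-free, and its treewidth is at least that of the $n\times n$ wall, which is unbounded.
\item If ${\cal H}$ has no graph from ${\cal T}$, use the line graphs of the same subdivided walls. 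Their small induced subgraphs are line graphs of small subgraphs of the subdivided walls. I expect these to be exactly line graphs of graphs in ${\cal S}$, and hence to lie in ${\cal T}$. Line graphs of subcubic graphs of large treewidth again have large treewidth.
\end{itemize}

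The main obstacle is the last item. One must check that induced subgraphs of the line graph correspond to subgraphs (edge subsets) of the subdivided wall, and that these subgraphs are still in ${\cal S}$ after discarding isolated vertices. One must also check that treewidth does not collapse under taking line graphs of graphs of bounded degree. The latter holds because the treewidth of $L(G)$ is, up to a factor depending on the maximum degree, at least the treewidth of $G$. This is exactly the content of~\cite{LR22}, which I would cite rather than reprove.
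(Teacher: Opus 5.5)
Your proposal is correct and takes the same approach as the paper, which gives no separate argument: it applies the Lozin--Razgon treewidth dichotomy~\cite{LR22}, using C1 on the bounded-treewidth side and C5 on the unbounded side. Your explicit families ($K_n$, $K_{n,n}$, subdivided walls and their line graphs) only reprove the easy direction of~\cite{LR22} and are sound. The line-graph step needs no dependence on the maximum degree, because $\mathrm{tw}(G)\leq 2\,\mathrm{tw}(L(G))+1$ for every graph $G$ with at least one edge (replace each edge in a bag by its two ends).
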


\noindent
Theorem~\ref{t-induced} could be viewed as a first meta-classification for the induced subgraph relation.
An example of a problem that satisfies the conditions of Theorem~\ref{t-induced} is {\sc Weighted Edge Steiner Tree}~\cite{BBJPPL21}, where the edges of the input graph have weights (in contrast, {\sc Edge Steiner Tree}  is a C123-problem~\cite{JMOPPSL25}).
However, in general, Theorem~\ref{t-induced} may be too restricted, as many graph problems can be solved in polynomial time on some graph class of unbounded treewidth (for instance, if the class of graphs under consideration has bounded clique-width or bounded mim-width).
Therefore, we need to search for other possible meta-classifications.

\subparagraph{Isomorphism, Spanning Subgraph, and Dissolution Relations}
Finally, we briefly consider ${\cal H}$-isomorphism-free, ${\cal H}$-spanning-subgraph-free, and ${\cal H}$-dissolution-free classes. Let $H$ be a graph. We note that every graph $G$ on at least $|V(H)|+1$ vertices is $H$-isomorphism-free and also $H$-spanning-subgraph-free. Hence, the isomorphism relation and spanning subgraph relation yield the same complexity results as for general graphs.

Moreover, a graph $G$ contains $H$ as a dissolution if $G$ is a subdivision of $H$. Hence, the vertex dissolution relation yields the same complexity results as for general graphs, a long as we may assume that we may restrict the input to graphs of sufficiently large maximum degree or that we may take as input a disjoint union of the original input graph $G$ and any constant-sized graph that is not a subdivision of $H$; such inputs are $H$-dissolution-free.

\section{The Proofs of Theorems~\ref{t-almost} and~\ref{t-fully}}\label{s-0}

We prove Theorem~\ref{t-almost} and show that {\sc Pathwidth} is a C46-Problem. Afterwards we prove Theorem~\ref{t-fully}. We restate both theorems below.

\AlmostThm*
\begin{proof}
Let $H$ be a graph, and let $\Pi$ be a C46-problem.
By C4, $\Pi$ is \NP-complete on planar graphs.
By C6, there exists an integer $s\geq 1$ such that $\Pi$ is \NP-complete on $sP_1$-free graphs.

Suppose first that $H$ is not planar.
Since the class of planar graphs is closed under taking induced subgraphs, contractions, induced minors, and induced topological minors, no planar graph contains $H$ in any of these four ways.
Therefore, the class of planar graphs is contained in the classes of $H$-free graphs, $H$-contraction-free graphs, $H$-induced-minor-free graphs, and $H$-induced-topological-minor-free graphs.
Thus $\Pi$ is \NP-complete on each of these classes.

Now suppose that $H$ is not $sP_1$-free.
The class of $sP_1$-free graphs is closed under taking induced subgraphs, contractions, induced minors, and induced topological minors, since none of these operations increases the independence number.
We get that no $sP_1$-free graph contains $H$ in any of the four corresponding ways.
As $\Pi$ is \NP-complete on $sP_1$-free graphs, it follows again that $\Pi$ is \NP-complete on each of the four graph classes under consideration.

Consequently, the only graphs $H$ not covered by the above arguments are those that are both planar and $sP_1$-free.
There are only finitely many such graphs.
Indeed, a planar graph has no clique of size $5$, while an $sP_1$-free graph has no independent set of size $s$.
By Ramsey's theorem, every planar $sP_1$-free graph has fewer than $R(5,s)$ vertices.
Hence, only finitely many graphs $H$ remain, and each of them is planar and $sP_1$-free for some $s\geq 1$.
\end{proof}

\noindent
We now show that {\sc Pathwidth} is a C46-Problem.
Recall that the {\sc Pathwidth} problem asks whether, given a graph $G$ and an integer $k$, the pathwidth of $G$ is at most $k$.

\begin{theorem}
{\sc Pathwidth} is a C46-problem.
\end{theorem}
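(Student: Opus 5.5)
We need C4 and C6 for {\sc Pathwidth}. Membership in \NP\ is standard: a path decomposition of width at most $k$ can be taken with at most $|V(G)|$ bags, equivalently a vertex ordering witnessing vertex separation number at most $k$, which is verifiable in polynomial time. So only the two hardness statements need to be assembled from the literature.

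For C6, we use co-bipartite graphs, which are $3P_1$-free and hence have independence number at most~$2$. Arnborg, Corneil and Proskurowski~\cite{ArnborgCP87} showed that {\sc Treewidth} is \NP-complete on co-bipartite graphs. On co-bipartite graphs (indeed on cocomparability graphs) treewidth and pathwidth coincide. This is the standard fact used by Kloks et al.~\cite{KloksKM99} and by Parra and Scheffler~\cite{ParraS97}: every minimal triangulation of a cocomparability graph is an interval graph, so the minimum clique number of a chordal supergraph equals that of an interval supergraph. Hence the identity map reduces {\sc Treewidth} on co-bipartite graphs to {\sc Pathwidth} on co-bipartite graphs, giving C6. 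If one prefers to avoid the cocomparability argument, one can instead cite directly the \NP-completeness of pathwidth for cobipartite graphs that is implicit in~\cite{ArnborgCP87} via the cutwidth/linear arrangement reduction.

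For C4, we invoke Monien and Sudborough~\cite{MonienS88}: computing the vertex separation number, equivalently the search number or node search number, is \NP-complete on planar graphs of maximum degree~$3$. Vertex separation number equals pathwidth by the result of Kinnersley, which is used in~\cite{KloksKM99}. The one subtlety is that Monien and Sudborough phrase their hardness in terms of search/cutwidth parameters on planar subcubic graphs, so the translation must preserve planarity. I would use the known relations: node search number equals pathwidth plus one, and on subcubic graphs edge search number relates to vertex separation via subdividing edges. Subdivision preserves planarity, so planarity is kept and pathwidth is \NP-complete on planar graphs.

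The main obstacle is precisely this bookkeeping: making sure that the planar hardness statement in the literature concerns a parameter that transfers to pathwidth through a planarity-preserving transformation. I expect the composition of~\cite{MonienS88} with the pathwidth = vertex separation identity to settle it. Once that is confirmed, C4 and C6 together make {\sc Pathwidth} a C46-problem.
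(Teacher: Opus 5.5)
Your proposal is correct and follows essentially the same route as the paper. For C4 you cite Monien and Sudborough~\cite{MonienS88} for planar graphs of maximum degree~$3$; the paper cites this directly for {\sc Pathwidth}, so your detour through vertex separation number is harmless but not needed. For C6 you combine the \NP-completeness of {\sc Treewidth} on co-bipartite graphs~\cite{ArnborgCP87} with the equality of treewidth and pathwidth on a class containing all co-bipartite graphs, as the paper does; the paper states this equality for claw-free AT-free graphs~\cite{ParraS97,KloksKM99}, while you state it for cocomparability graphs, and either class suffices.
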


\begin{proof}
For C4, we note that {\sc Pathwidth} is \NP-complete on planar (subcubic) graphs~\cite{MonienS88}.
For C6, we note that the pathwidth of a claw-free, AT-free graph is equal to its treewidth~\cite{ParraS97,KloksKM99}. Since co-bipartite graphs are claw-free and AT-free, and since {\sc Treewidth} (the problem of deciding whether a graph has treewidth at most~$k$) is \NP-complete on co-bipartite graphs~\cite{ArnborgCP87}, it follows that {\sc Pathwidth} is \NP-complete on co-bipartite graphs, which have independence number at most~$2$.
\end{proof}

\noindent 
Finally, we prove Theorem~\ref{t-fully}.

\FullyThm*
\begin{proof}
    Let $H$ be a graph and let $\Pi$ be a C6'78-problem.
    First suppose $H$ has a cycle.
    As $\Pi$ satisfies C6' and $2P_2\ssi C_r$ for every $r\geq 6$, we find that $\Pi$ is \NP-complete for $H$-free graphs.
    Now assume that $H$ is a forest.
    First suppose that $K_{1,3}\ssi H$.
    Note that the class of line graphs is a subclass of $K_{1,3}$-free graphs, and thus of $H$-free graphs.
    Hence, as $\Pi$ satisfies C8, we find that $\Pi$ is \NP-complete on $H$-free graphs.
    In the remaining case, $H$ is a linear forest.

    If $H$ contains at least four connected components, then $4P_1\ssi H$.
    Hence, as $\Pi$ satisfies C6', we find that $\Pi$ is \NP-complete on $H$-free graphs.
    Now suppose that $H$ consists of exactly three connected components.
    If one of the connected components of $H$ has an edge, then $2P_1+P_2\ssi H$.
    By C6', we find again that $H$ is \NP-complete on $H$-free graphs.
    Otherwise, $H=3P_1\ssi P_1+P_3$, and we find that $\Pi$ is polynomial-time solvable for $H$-free graphs due to C7.
    Now suppose that $H$ consists of two connected components.
    If both components of $H$ contain an edge, then $2P_2\ssi H$ and $\Pi$ is \NP-complete on $H$-free graphs due to C6'.
    Otherwise, $H=P_1+P_r$ for some $r\geq 1$.
    If $r\leq 3$, then $\Pi$ on $H$-free graphs is polynomial-time solvable due to C7, and otherwise it is \NP-complete due to C6'.
    Finally, suppose $H$ consists of one connected component, so $H=P_r$ for some $r\geq 1$.
    If $r\leq 4$, then $\Pi$ on $H$-free graphs is polynomial-time solvable by C7; otherwise, $2P_2\ssi H$, and it is \NP-complete by C6'.
\end{proof}

\section{The Proof of C3}\label{s-1}

In this section, we prove that {\sc Chromatic Sum} satisfies C3. Conditions C1 and C2 are known to hold~\cite{Ja97,Ma01}, and thus this is the only missing ingredient to apply Theorem~\ref{t-dicho} and completely classify the complexity of {\sc Chromatic Sum} on ${\cal H}$-subgraph-free graphs for any finite set ${\cal H}$.

\begin{theorem}\label{t-c3}
    Let ${\cal G}$ be the class of planar subcubic graphs.
    For every $\ell\geq 0$, {\sc Chromatic Sum} is \NP-complete for ${\cal G}^{2\ell}$, and thus satisfies C3.
\end{theorem}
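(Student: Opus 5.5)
For $\ell=0$ the statement is the known \NP-completeness of {\sc Chromatic Sum} on planar (sub)cubic graphs~\cite{Ma01}. So I fix $\ell\geq 1$ and reduce from an \NP-complete problem on planar subcubic graphs to {\sc Chromatic Sum} on $G^{2\ell}$. The plan is to first describe the chromatic sum of $G^{2\ell}$ exactly in terms of $G$, and then pick a source problem that this description captures. Membership in \NP\ is clear, so only hardness needs work.

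\textbf{Step 1: the cost of one subdivided edge.} Each edge $uv$ of $G$ becomes a path $u,a_1,\ldots,a_{2\ell},v$ with an even number $2\ell$ of internal vertices. Fix colours $c(u)$ and $c(v)$. I claim the minimum cost of colouring $a_1,\ldots,a_{2\ell}$ is $3\ell$ (alternating $1,2$) in every case except $c(u)=c(v)\in\{1,2\}$, where it is $3\ell+1$.
\begin{itemize}
\item The lower bound $3\ell$ holds because consecutive internal vertices get distinct colours, so each consecutive pair costs at least $1+2$.
\item In the exceptional case a proper $\{1,2\}$-colouring is impossible by parity, since the path $u,\ldots,v$ has an odd number $2\ell+1$ of edges. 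So some internal vertex must get colour at least $3$.
\item In every other case an explicit alternating pattern works. If $\{c(u),c(v)\}=\{1,2\}$, alternate starting with $c(v)$ next to $u$. If an endpoint has colour at least $3$, that endpoint imposes no constraint on a $\{1,2\}$-alternation.
\end{itemize}
Summing over all edges gives
\[
\mathrm{cs}(G^{2\ell}) \;=\; 3\ell|E(G)| \;+\; \min_{c}\Bigl(\sum_{v\in V(G)} c(v) \;+\; \bigl|\{uv\in E(G) : c(u)=c(v)\in\{1,2\}\}\bigr|\Bigr),
\]
where the minimum is over arbitrary (not necessarily proper) colourings $c$ of $V(G)$. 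In an optimal $c$ one may assume every colour lies in $\{1,2,3\}$, since colour $3$ already removes all penalties at that vertex. Thus the right-hand minimum is a partition problem on $G$: choose $X\subseteq V$ to receive colour $3$, and split $V\setminus X$ into $V_1,V_2$, paying $|V|+|V_2|+2|X|$ plus the number of edges inside $V_1$ or inside $V_2$.

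\textbf{Step 2: hardness of the partition problem.} I then show this weighted ``near-$2$-colouring'' problem is \NP-hard on planar subcubic graphs. The objective rewards a large set $V_1$ that is nearly independent, as with {\sc Independent Set}, and also penalises monochromatic edges. I would reduce from {\sc Independent Set} on planar subcubic graphs (or planar cubic graphs), which is \NP-complete. Given an instance $F$, I build a planar subcubic graph $G$ by replacing each vertex or edge of $F$ by a small planar subcubic gadget; for example, subdividing edges of $F$ a suitable odd number of times. The gadgets should make it locally optimal to put no vertex in $X$, to 2-colour the gadget internals perfectly, and to have exactly one ``defect'' (a monochromatic edge, or a vertex coloured $2$ instead of $1$) per selected or unselected original vertex. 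The aim is an identity $\mathrm{cs}(G^{2\ell})=A-\alpha(F)$ for an explicit $A$ computable from $F$ and $\ell$. I would verify this in two directions:
\begin{itemize}
\item an independent set of $F$ of size $k$ yields a colouring of $G^{2\ell}$ of cost $A-k$;
\item any colouring of $G^{2\ell}$ can be normalised gadget by gadget (eliminating colour $3$, then moving defects) without increasing its cost, until it has this form.
\end{itemize}

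\textbf{Main obstacle.} The main obstacle is Step 2, namely designing gadgets for which the normalisation argument is airtight. Two local options must never be cheaper than the intended configurations: paying $+1$ for a monochromatic edge, and paying $+1$ for recolouring a vertex from $1$ to $2$. My first attempt would be to take $F$ cubic and use its $2$-subdivision together with the known fact that $\alpha$ increases by exactly one per doubly subdivided edge. I would then check by case analysis on the degree-$3$ vertices that colour $3$ never pays off, since it costs $2$ extra but saves at most the penalties on three edges. If this local accounting fails, the fallback is to reduce instead from planar $3$-colourability, or from the known hardness on planar cubic graphs, with additional pendant-path gadgets that pin colours.
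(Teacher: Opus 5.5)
\textbf{There is a genuine gap.} Step~2, which is the entire hardness argument, is not carried out. Step~1 also has a miscalculation.

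\textbf{The error in Step~1.} Your decomposition is structurally right: for $\ell\geq 1$ the original vertices are pairwise non-adjacent in $G^{2\ell}$, and each subdivided path interacts only with its two ends. But the exceptional value is wrong for colour~$1$. If $c(u)=c(v)=1$, then $a_1,a_{2\ell}\neq 1$, so at most $\ell-1$ internal vertices get colour~$1$. A $\{1,2\}$-colouring is impossible by parity, so the cost is $3\ell+2$; already for $\ell=1$ you need $\{a_1,a_2\}=\{2,3\}$. Only $c(u)=c(v)=2$ costs $3\ell+1$. The correct objective is therefore $\sum_v c(v)+2e(V_1)+e(V_2)$, where $e(V_i)$ is the number of edges inside $V_i$. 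Your ``$+1$ per monochromatic edge'' accounting is off.

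\textbf{The missing Step~2.} No gadget is given, and the suggested first attempt cannot work. The $2\ell$-subdivision of the $2$-subdivision of $F$ is just the $(6\ell+2)$-subdivision of $F$. Concretely, applying Step~1 to $F^2$, a path $x,a,b,y$ has minimum cost $5$, $4$ or $3$ according to whether $c(x)=c(y)=1$, $c(x)=c(y)=2$, or otherwise. So you recover exactly the same penalised partition problem on $F$, and nothing ties its optimum to $\alpha(F)$. The fact that $\alpha$ grows by one per doubly subdivided edge concerns {\sc Independent Set}, not this objective.

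\textbf{The missing idea.} You need an edge gadget whose extra cost is one exactly when both ends get colour~$1$. The paper starts from {\sc Independent Set} on planar cubic graphs $G$ (Mohar). It replaces each edge $uv$ by the path $u,p,q,v$, a vertex $r$ adjacent to $p$ and $q$, and a pendant $s$ at $r$. The result $\widehat G$ is planar subcubic, and $G'=\widehat G^{2\ell}$. With your corrected Step~1, checking this gadget is a short local computation:
\begin{itemize}
\item The triangle $pqr$ contributes at least $6$ (colours plus penalties), with equality only if it contains colour~$1$.
\item $s$ contributes at least $1$, with equality only if $c(s)=1\neq c(r)$.
\item Hence a gadget costs at least $7$. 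Cost exactly $7$ forces $1\in\{c(p),c(q)\}$, which incurs a penalty when $c(u)=c(v)=1$.
\item For $(c(p),c(q),c(r),c(s))$, the choices $(1,3,2,1)$ or $(3,1,2,1)$ attain $7$ whenever $c(u)\neq 1$ resp.\ $c(v)\neq 1$, and $(2,3,1,2)$ attains $8$ when both ends have colour~$1$.
\end{itemize}
On original vertices colour $2$ then dominates colours $\geq 3$, and $\max_S(|S|-e(S))=\alpha(G)$. This gives $\mathrm{cs}(G')=(18\ell+7)|E(G)|+2|V(G)|-\alpha(G)$, which is exactly the paper's threshold.

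Your Step~1 framework plus this gadget would in fact be shorter than the paper's exchange argument with a lexicographically minimal colouring. Without such a gadget, though, the proposal does not prove hardness.

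\textbf{Minor point on $\ell=0$.} The paper does not lean on \cite{Ma01}, whose proof it could not verify. It uses $\mathcal{G}^{2}\subseteq\mathcal{G}^{0}$ instead.
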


\noindent
We note that this result is strictly stronger than (and implies, for $\ell=0$) \NP-completeness of {\sc Chromatic Sum} on planar subcubic graphs. Ma\l{}afiejski~\cite{Ma01} already proved that {\sc Chromatic Sum} is \NP-complete for planar subcubic graphs, but 
\href{https://doi.org/10.1016/S1571-0653(05)80080-0}{the electronic edition of~\cite{Ma01}} refers to a 1-page document that does not provide a proof. As such, we could not verify how much our proof resembles the proof of~\cite{Ma01}. Looking further, the construction in the \NP-completeness result for planar graphs by Halld{\'{o}}rsson and Kortsarz~\cite{HalldorssonK02} has maximum degree~$5$, but somewhat resembles our construction: we both reduce from {\sc Independent Set} on planar graphs and replace every edge by a triangle with pendants (we need one instead of three), but our reduction adds subdivisions (and must do so for our proof to work). The reduction of Ma\l{}afiejski et al.~\cite{MalafiejskiGJK04} in their \NP-completeness result for planar bipartite graphs of maximum degree~$5$ is substantially different, as it starts from {\sc 3-Dimensional Matching}.

\begin{proof}[Proof of Theorem~\ref{t-c3}]
    We may assume that $\ell > 0$; as ${\cal G}^2 \subseteq {\cal G}^0$, \NP-completeness for ${\cal G}^2$ implies \NP-completeness for ${\cal G}^0$.
    Mohar~\cite{Mo01} proved that {\sc Independent set} is \NP-complete on planar cubic graphs.
    Let $(G,k)$ be an instance of {\sc Independent Set}, where $G$ is planar and cubic.
    We first construct an auxiliary graph $\widehat G$ as follows.
    For each edge $uv\in E(G)$, delete $uv$, introduce four new vertices $p_{uv}$, $q_{uv}$, $r_{uv}$, and $s_{uv}$, and add the edges $up_{uv}$, $p_{uv}q_{uv}$, $q_{uv}v$, $p_{uv}r_{uv}$, $q_{uv}r_{uv}$, and $r_{uv}s_{uv}$.
    For every edge $uv\in E(G)$, we denote by $\widehat G_{uv}$ the subgraph of $\widehat G$ induced by $\{u,v,p_{uv},q_{uv},r_{uv},s_{uv}\}$.
    When clear from the context, we simply write $p$, $q$, $r$, and $s$ instead of $p_{uv}$, $q_{uv}$, $r_{uv}$, and $s_{uv}$.

    The graph $G'$ is obtained from $\widehat G$ by subdividing every edge exactly $2\ell$ times.
    For every edge $uv\in E(G)$, we denote by $G'_{uv}$ the subgraph of $G'$ obtained from $\widehat G_{uv}$ by this subdivision.
    We call $G'_{uv}$ the \emph{edge gadget} for $uv$.
    The six edges $up$, $pq$, $qv$, $pr$, $qr$, and $rs$ of $\widehat G_{uv}$ become six paths of length $2\ell+1$ in $G'_{uv}$.
    We call these paths the \emph{gadget paths} of $G'_{uv}$ and denote them by $P^{up}_{uv}$, $P^{pq}_{uv}$, $P^{qv}_{uv}$, $P^{pr}_{uv}$, $P^{qr}_{uv}$, and $P^{rs}_{uv}$, respectively.

    The auxiliary graph $\widehat G$ is planar and subcubic.
    Hence $G'$, being the $2\ell$-subdivision of $\widehat G$, belongs to $\mathcal{G}^{2\ell}$.
    \begin{figure}
        \centering
        \includegraphics[width=0.4\linewidth]{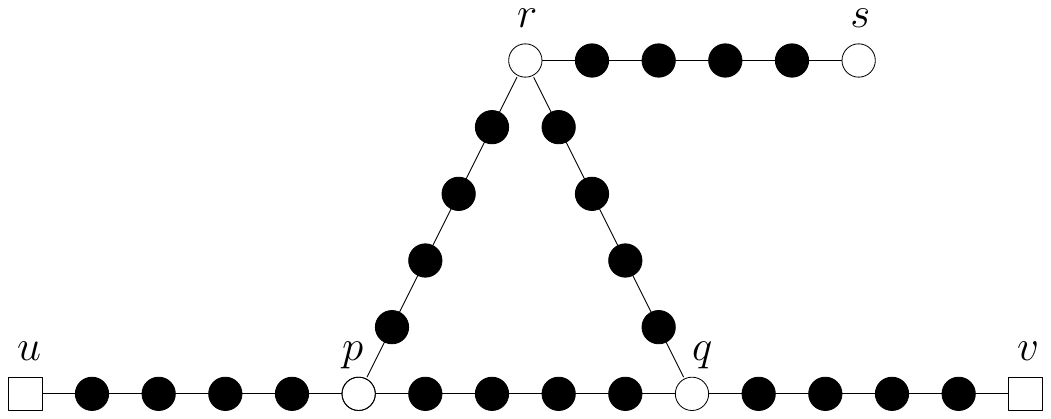}
        \hspace{0.08\linewidth}
        \includegraphics[width=0.4\linewidth]{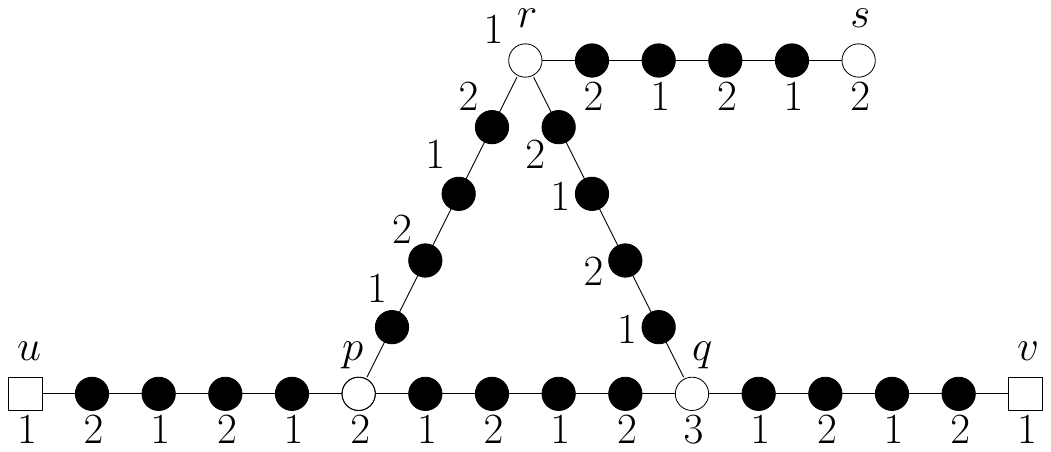}
        \\\bigskip
        \includegraphics[width=0.4\linewidth]{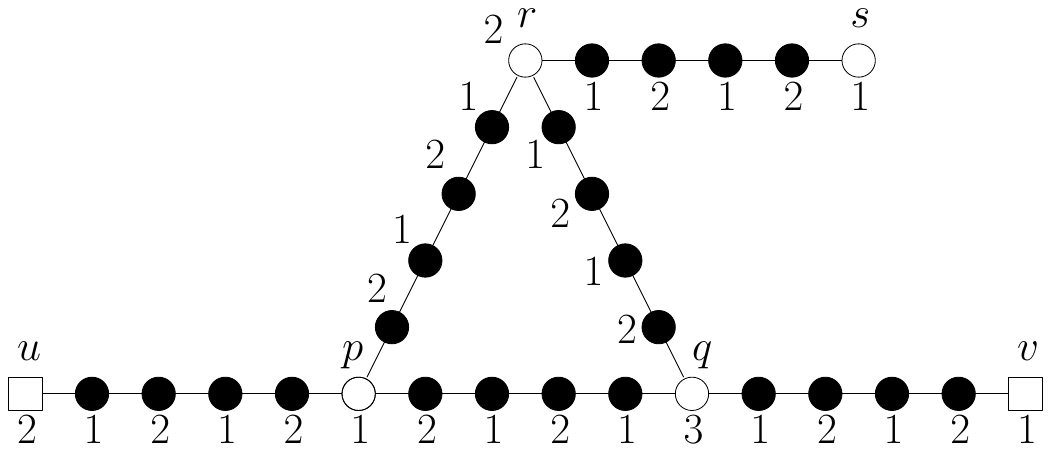}
        \hspace{0.08\linewidth}
        \includegraphics[width=0.4\linewidth]{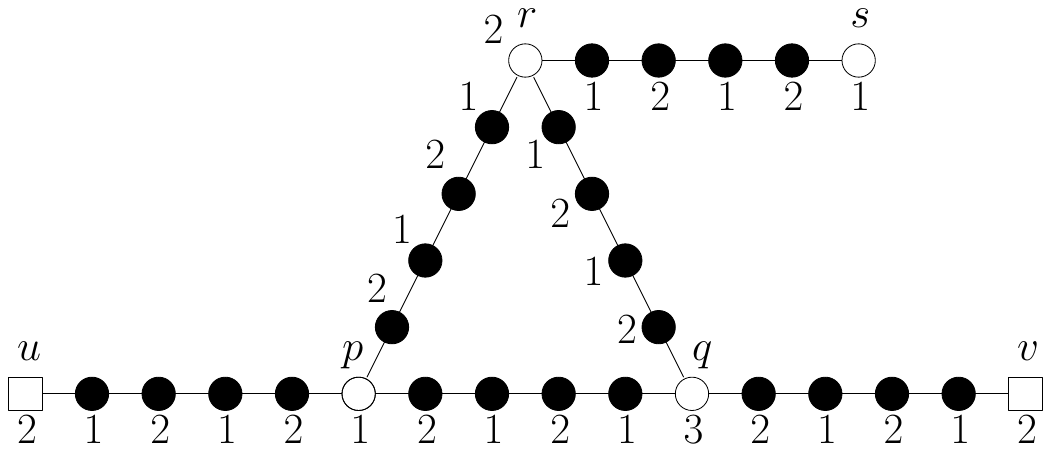}
        \caption{Construction and colourings of the edge gadget for the reduction of Theorem~\ref{t-c3}. \emph{Top left}: The edge gadget for an edge $uv \in E(G)$ for $\ell=2$. The vertices $u,v,p,q,r,s$ are indicated in white. \emph{Top right}: The colouring $c^1$ in this gadget. The sum of the colouring, except $u,v$, is $36+8$. \emph{Bottom left}: The colouring $c^2$ in this gadget. The sum of the colouring, except $u,v$, is $36+7$. \emph{Bottom right}: The colouring $c^4$ in this gadget. The sum of the colouring, except $u,v$, is $36+7$.}
        \label{fig:edgegadget}
    \end{figure}
We make the following claim:

    \begin{claim} \label{clm:lowerbase}
        For any colouring $c$ of $G'$ and for any odd integer $L$, the sum of the restriction of $c$ to any path $Q$ in $G'$ of length $L$ is at least $3(L+1)/2$. Moreover, this lower bound is only achieved if the colours of $c$ restricted to $Q$ are only $1$ and $2$.
    \end{claim}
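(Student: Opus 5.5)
A path $Q$ of length $L$ (that is, $L$ edges) has $L+1$ vertices, and $L+1$ is even because $L$ is odd. The plan is to pair consecutive vertices along $Q$ and bound each pair separately. Write $Q=x_0x_1\cdots x_L$ and form the $(L+1)/2$ disjoint pairs $\{x_{2i},x_{2i+1}\}$ for $i=0,\ldots,(L-1)/2$. Each pair spans an edge of $Q$, and hence an edge of $G'$. So $c$ gives its two vertices distinct positive integer colours, and their sum is at least $1+2=3$. Summing over all pairs gives total colour at least $3(L+1)/2$ on $V(Q)$. Only properness of $c$ on edges of $G'$ is used here, so it does not matter whether $Q$ is induced.

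For the equality case, assume the sum over $Q$ is exactly $3(L+1)/2$. Since each pair contributes at least $3$, every pair must contribute exactly $3$. Two distinct positive integers sum to $3$ only as $\{1,2\}$, so every pair is coloured $\{1,2\}$. The pairs partition $V(Q)$, so every vertex of $Q$ has colour $1$ or $2$. This proves the ``only if'' direction, which is what the claim asserts.

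There is no real obstacle here. The one point to get right is the parity bookkeeping: the hypothesis ``$L$ odd'' is exactly what makes $V(Q)$ split perfectly into consecutive edge-pairs, with no leftover vertex whose colour could be $1$ and weaken the bound. One could also note that the bound is attained, by colouring $Q$ alternately $1,2$, but the claim does not require this.
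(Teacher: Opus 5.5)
Your proof is correct and follows the paper's argument exactly: you partition the $L+1$ vertices of $Q$ into $(L+1)/2$ pairs of consecutive (hence adjacent) vertices, each contributing at least $3$. Equality forces every pair to be coloured $\{1,2\}$.
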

    \begin{claimproof}
        Since $L$ is odd, the vertices of $Q$ can be partitioned into $(L+1)/2$ pairs of consecutive vertices.
        Each pair contributes at least $3$ to the sum, since its vertices are adjacent and receive distinct positive integer colours.
        Hence, the sum on $Q$ is at least $3(L+1)/2$.

        If equality holds, then the colours for every pair must sum to exactly $3$, and thus every vertex of $Q$ receives colour $1$ or $2$.
    \end{claimproof}
    The following is an immediate corollary of Claim~\ref{clm:lowerbase} and the fact that the gadget paths of $G'_{uv}$ have length $2\ell-1$ for every $uv \in E(G)$. Hence, the internal vertices of each gadget path induce a path of length $2\ell-1$.

    \begin{claim} \label{clm:lower}
        For any colouring $c$ of $G'$ and any $uv \in E(G)$, the sum of the restriction of $c$ to the internal vertices of the gadget paths of $G'_{uv}$ is at least~$18\ell$. Moreover, this lower bound is only achieved if the colours of $c$ restricted to those vertices are only~$1$ and~$2$.
    \end{claim}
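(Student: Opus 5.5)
This follows from Claim~\ref{clm:lowerbase} by counting the internal vertices of the six gadget paths and summing the resulting lower bounds.

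\emph{Structure of the gadget.} In $\widehat G_{uv}$ there are exactly six edges: $up$, $pq$, $qv$, $pr$, $qr$, $rs$. In $G'$ each becomes a gadget path with $2\ell$ subdivision vertices. The internal vertices of each gadget path therefore induce a path on $2\ell$ vertices, that is, of length $2\ell-1$. This is odd since $\ell\geq 1$; the case $\ell=0$ was excluded at the start of the proof of Theorem~\ref{t-c3}.

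\emph{Disjointness.} The sets of internal vertices of the six gadget paths are pairwise disjoint, because subdivision vertices of distinct edges are distinct. Hence the restriction of $c$ to all internal vertices splits as a sum over the six paths.

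\emph{Applying Claim~\ref{clm:lowerbase}.} Apply it to each internal path $Q$ with $L=2\ell-1$. Each contributes at least $3(L+1)/2=3\ell$, so the total is at least $6\cdot 3\ell=18\ell$.

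\emph{Equality case.} Suppose the total equals $18\ell$. Each of the six contributions is at least $3\ell$, so each must equal exactly $3\ell$. By the equality part of Claim~\ref{clm:lowerbase}, every internal vertex of every gadget path then receives colour $1$ or $2$.

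\emph{Possible obstacle.} The only point needing care is the count: each gadget path has $2\ell+2$ vertices in total, so removing the two endpoints leaves $2\ell$ internal vertices. This even count is exactly what makes Claim~\ref{clm:lowerbase} apply with an odd length.
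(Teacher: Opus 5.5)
Your proof is correct and is the paper's argument: apply Claim~\ref{clm:lowerbase} with $L=2\ell-1$ to the internal vertices of each of the six gadget paths, sum the bounds, and use the equality case to force every path to be tight. Your explicit count of $2\ell$ internal vertices on a path of length $2\ell+1$, together with your disjointness remark, supplies details the paper leaves implicit.
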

    Now set $k' = (18\ell+7)m + 2n - k$, where $n$ and $m$ are the number of vertices and edges of $G$ respectively. Then $(G',k')$ is the constructed instance of {\sc Chromatic Sum}.

    We now show that $(G,k)$ is a yes-instance of {\sc Independent Set} if and only if $(G',k')$ is a yes-instance of {\sc Chromatic Sum}. To support this argument, we consider four possible colourings of the gadget $G'_{uv}$ for $uv \in E(G)$. Refer to Figure~\ref{fig:edgegadget}. Colouring $c^1$ assigns colour $1$ to vertices $u,v,r$, colour $2$ to vertices $p,s$, and colour $3$ to vertex $q$; the vertices of the gadget paths receive alternatingly colour $1$ and $2$. Note that since the ends of the gadget paths are assigned distinct colours and the gadget paths have odd length, this colouring is valid. Also note that, using Claim~\ref{clm:lower}, the sum of $c^1$, excluding vertices $u,v$, is $18\ell+8$. Colouring $c^2$ assigns colour $1$ to $v,p,s$, colour $2$ to $u,r$, and colour $3$ to $q$; the vertices of the gadget paths receive alternatingly colour $1$ and $2$. By the same argument, this colouring is valid. The sum of $c^2$, excluding vertices $u,v$, is $18\ell + 7$. We define colouring $c^3$ similarly as colouring $c^2$, except that we flip the colours of $u,v$ ($u$ receives colour $1$, $v$ colour $2$). This again yields a valid colouring with sum $18\ell+7$ (excluding $u,v$). Finally, colouring $c^4$ assigns colour $1$ to $p,s$, colour $2$ to $u,v,r$, and colour $3$ to $q$; the vertices of the gadget paths receive alternatingly colour $1$ and $2$. By the same argument, this colouring is valid. The sum of $c^4$, excluding $u,v$, is $18\ell + 7$.

    Now suppose that $(G,k)$ is a yes-instance of {\sc Independent Set}. Let $I$ be an independent set of $G$ of size at least $k$. We construct a colouring $c$ of $G'$ as follows. For every $u \in V(G)$, assign $u$ colour $1$ if $u \in I$ and colour $2$ otherwise. For every edge $uv \in E(G)$, we do as follows. If $u,v\not\in I$, then $u,v$ were assigned colour $2$ and we assign colouring $c^4$. Otherwise, we may assume that $v \in I$ and assign colouring $c^2$. This completes the description of $c$. Now consider the sum of $c$. Note that the vertices in $V(G)$ contribute at most $2n-k$ to the sum of $c$. The vertices of each edge gadget contribute $18\ell+7$ each by the above reasoning about $c^2$ and $c^4$. Hence, the sum of $c$ is at most $k'$, and $(G',k')$ is a yes-instance of {\sc Chromatic Sum}.

    For the converse, suppose that $(G',k')$ is a yes-instance of {\sc Chromatic Sum}. Let $c$ denote a colouring that has, in (lexicographic) order, minimum sum, minimum number of vertices of degree~$2$ with colour larger than~$2$, and minimum number of vertices of degree~$1$ or~$3$ with colour larger than~$2$. Note that the latter two conditions are not equivalent to $c$ having minimum sum and minimum number of vertices of colour larger than~$3$; the order is important. By assumption, the sum of $c$ is at most $k'$.

    For any vertex set $X \subseteq V(G')$, let $c(X) = \bigcup_{x \in X} \{c(x)\}$. We prove a series of claims to show that the vertices of $G$ coloured~$1$ by~$c$ form an independent set in $G$.

    \begin{claim} \label{clm:subset}
        The image of $c$ is a subset of $\{1,2,3\}$.
    \end{claim}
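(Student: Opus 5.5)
We argue by contradiction, using the standard recolouring exchange: if $c$ takes a colour $>3$, we lower it to the smallest colour not appearing on its neighbours. This gives a valid colouring with strictly smaller sum, contradicting the choice of $c$. The graph $G'$ is subcubic, since $G$ is cubic and every vertex of $\widehat G$ has degree at most~$3$. Hence every vertex $x$ has at most three neighbours, which occupy at most three colours.

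We only need to handle the case where all of $1,2,3$ appear on the neighbours of $x$. Suppose some $x$ has $c(x)\geq 4$. If $x$ has degree at most~$2$, its neighbours use at most two colours, so some colour in $\{1,2,3\}$ is free at $x$. Recolouring $x$ with it yields a valid colouring and lowers the sum by at least~$1$, contradicting minimality. If $x$ has degree~$3$, then $x$ is either an original vertex $u\in V(G)$ or one of $p_{uv},q_{uv},r_{uv}$. If its three neighbours do not use all of $1,2,3$, the same recolouring works. So the remaining case is a degree-$3$ vertex $x$ whose three neighbours receive exactly the colours $1,2,3$ and with $c(x)\geq 4$.

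In that case we recolour locally. Since $\ell>0$, every neighbour $y$ of $x$ is an internal subdivision vertex of degree~$2$. Take the neighbour $y$ with $c(y)=3$. Its other neighbour $z$ has some colour, and at least one of $1,2$ differs from $c(z)$. Recolour $y$ with that colour $a\in\{1,2\}$, which is valid because $c(x)\geq 4\neq a$. Now colour $3$ is free at $x$, so set $c(x)=3$. The sum strictly decreases, since $y$ drops by at least~$1$ and $x$ drops by at least~$1$. This contradicts the minimality of the sum.

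The main obstacle is this degree-$3$ case. The key structural fact making it work is that, because of the $2\ell\geq 2$ subdivisions, all neighbours of a branch vertex have degree~$2$ and can be shifted to colours in $\{1,2\}$. The secondary tie-breaking criteria in the choice of $c$ are not needed for this claim, because every move strictly decreases the sum.
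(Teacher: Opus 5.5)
Your proof is correct and follows essentially the same route as the paper. If one of $1,2,3$ is free at $x$, lower $x$ directly. Otherwise use $\ell>0$ to recolour the degree-$2$ neighbour $y$ with $c(y)=3$ to a colour in $\{1,2\}\setminus\{c(z)\}$. The paper stops at that point, since recolouring $y$ alone already strictly decreases the sum, so your extra step of setting $c(x)=3$ is valid but unnecessary.
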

    \begin{claimproof}
        Suppose the image of $c$ contains a number $\alpha > 3$. Let $x$ be such that $c(x) = \alpha$. If $\{1,2,3\} \setminus c(N(x)) \not= \emptyset$, then we can recolour $x$ to a colour from $\{1,2,3\} \setminus c(N(x)) \not= \emptyset$ to obtain a colouring of $G'$ of strictly smaller sum than $c$. Hence, $c(N(x)) = \{1,2,3\}$ and $x$ has degree $3$. This argument implies that every vertex of degree at most~$2$ has colour at most~$3$ under $c$. Let $y \in N(x)$ be such that $c(y) = 3$. As $x$ has degree~$3$ and $\ell > 0$, it follows by the construction of $G'$ that $y$ has degree~$2$. Let $z \not= x$ be the other neighbour of $y$. Note that $z$ again has degree~$2$ by the construction of $G'$ and the fact that $\ell > 0$. Let $\beta \in \{1,2\}$ such that $c(z) \not= \beta$. Now recolour $y$ by colour $\beta$; this yields a valid colouring of $G'$ (note that $c(z) \not=\beta$ and $c(x) = \alpha > 3 > \beta$) of strictly smaller sum than~$c$, a contradiction.
    \end{claimproof}
    From now on, we use the result of Claim~\ref{clm:subset} implicitly, without referring to it.

    \begin{claim} \label{clm:ends}
        Let $uv \in E(G)$ and let $P$ be a gadget path of $G'_{uv}$ with ends $y,z$. If $c(x) = 3$ for some internal vertex of $P$, then $c(y)=c(z) \in\{1,2\}$.
    \end{claim}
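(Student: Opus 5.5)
The plan is a direct exchange argument on the gadget path $P$, using only the minimality of the sum of $c$. Write $y=a_0,a_1,\dots,a_{2\ell},a_{2\ell+1}=z$ for the vertices of $P$, so that the internal vertices are $a_1,\dots,a_{2\ell}$. By the construction of $G'$, each internal vertex has degree~$2$, and both of its neighbours lie on $P$. Hence recolouring only internal vertices of $P$ affects validity only along $P$ itself. By Claim~\ref{clm:subset}, every colour used is in $\{1,2,3\}$.

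First, I establish a strict lower bound. The $2\ell$ internal vertices split into the $\ell$ consecutive adjacent pairs $\{a_{2i-1},a_{2i}\}$. As in Claim~\ref{clm:lowerbase}, each pair has colour sum at least~$3$. If some internal vertex $x$ has $c(x)=3$, then the pair containing $x$ has sum at least $4$. So the current sum on the internal vertices is at least $3\ell+1$.

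Second, I show that whenever the ends fail the conclusion, there is a cheaper valid colouring. Any alternating $1,2$ colouring of $a_1,\dots,a_{2\ell}$ has sum exactly $3\ell$. Since the number of internal vertices is even, $a_1$ and $a_{2\ell}$ receive different colours. Such a colouring is valid if and only if $c(a_1)\neq c(y)$ and $c(a_{2\ell})\neq c(z)$. I check the cases where the conclusion fails.
\begin{itemize}
\item If $c(y),c(z)\in\{1,2\}$ with $c(y)\neq c(z)$, set $a_1:=c(z)$. Then $a_{2\ell}$ gets $c(y)\neq c(z)$, and $a_1=c(z)\neq c(y)$.
\item If exactly one end, say $y$, has colour $3$, choose $a_{2\ell}:=3-c(z)$. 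Then $a_1\in\{1,2\}$, so $a_1\neq 3=c(y)$.
\item If both ends have colour $3$, any alternating colouring is valid.
\end{itemize}
In each case, the recoloured internal sum is $3\ell<3\ell+1$, so the total sum strictly decreases. This contradicts the minimality of the sum of $c$.

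Hence $c(y)=c(z)\in\{1,2\}$. The only step needing care is checking validity at the two ends, which is exactly the case split above. The parity of the number of internal vertices ($2\ell$, even) is what forces $a_1$ and $a_{2\ell}$ to differ, and this is why the case of equal end colours cannot be repaired. The secondary tie-breaking criteria on $c$ are not needed for this claim.
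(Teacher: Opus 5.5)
Your proposal is correct and follows essentially the same exchange argument as the paper: recolour the internal vertices of $P$ alternately with $1$ and $2$, which attains the lower bound $3\ell$ from Claim~\ref{clm:lowerbase}, while $c$ cannot attain it because of the vertex of colour~$3$. Your explicit case check that the alternating colouring is valid at both ends is a detail the paper only asserts.
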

    \begin{claimproof}
        Recall that $P$ has length $2\ell+1$. If $c(y) \not= c(z)$ or $c(y)=c(z)=3$, then we can recolour the internal vertices of $P$ alternatingly by colours $1$ and $2$ to yield a valid colouring $c'$. By Claim~\ref{clm:lowerbase}, the sum of the restriction of $c'$ to the internal vertices of $P$ achieves the lower bound of $3\ell$. Also by Claim~\ref{clm:lowerbase}, the lower bound cannot be achieved by $c$, as $c(x)=3$. Hence, the sum of $c'$ is strictly lower than the sum of $c$, a contradiction, and the claim follows.
    \end{claimproof}

    \begin{claim} \label{clm:threeneighb}
        Let $uv \in E(G)$ and let $P$ be a gadget path of $G'_{uv}$ with ends $y,z$. If $c(x) = 3$ for some internal vertex of $P$, then $c(N(x)) = \{1,2\}$.
    \end{claim}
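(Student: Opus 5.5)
We argue by contradiction: take an internal vertex $x$ of $P$ with $c(x)=3$ and suppose $c(N(x))\neq\{1,2\}$. We then recolour to get either a smaller sum or, at equal sum, fewer degree-$2$ vertices with colour larger than $2$. Both contradict the lexicographic choice of $c$.

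Since $x$ is an internal vertex of a gadget path, it has degree $2$. Let its neighbours be $w_1,w_2$; these lie on $P$ and may be the ends $y$ or $z$. The image of $c$ lies in $\{1,2,3\}$ by Claim~\ref{clm:subset}, and $c(w_i)\neq 3$ because $w_i$ is adjacent to $x$. Hence $c(N(x))\subseteq\{1,2\}$. If $c(N(x))\neq\{1,2\}$, then some $\beta\in\{1,2\}$ is missing from $c(N(x))$. Recolouring $x$ with $\beta$ gives a valid colouring of strictly smaller sum, which contradicts the minimality of the sum of $c$. Therefore $c(N(x))=\{1,2\}$.

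No real obstacle is expected. The argument uses only the first criterion of the lexicographic order, namely minimum sum. It also does not need Claim~\ref{clm:ends}, though that claim is consistent with the conclusion: the ends of $P$ share a colour in $\{1,2\}$, so the parity along the path forces a "defect" vertex of colour $3$ whose two neighbours are $1$ and $2$. The one point to check is that both neighbours of $x$ carry colours other than $3$, which holds because they are adjacent to $x$.
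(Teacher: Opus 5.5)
Your proof is correct, and it takes a genuinely different and shorter route than the paper. You use only two facts: $c$ has minimum sum, and Claim~\ref{clm:subset} holds. If a vertex of colour $3$ has no neighbour coloured some $\beta\in\{1,2\}$, it can be recoloured to $\beta$. This is the same recolouring step that opens the proof of Claim~\ref{clm:subset}.

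The paper argues differently. It first invokes Claim~\ref{clm:ends} to get $c(y)=c(z)\in\{1,2\}$. It then splits $P$ at $x$ into the subpaths towards $y$ and towards $z$, of odd and even length respectively. Using Claim~\ref{clm:lowerbase}, it argues that sum-minimality forces colours $1$ and $2$ to alternate on each side. From this it reads off $c(b)=3-c(z)$ and $c(a)=c(y)$, so the two neighbours of $x$ receive different colours.

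The paper's route gives somewhat more information: the rest of $P$ is $1$--$2$ alternating, and it says which neighbour gets which colour. None of this is used later. Claim~\ref{clm:nothrees} only needs that, for a colour-$3$ neighbour of $z$, the other neighbour has colour different from $c(z)$.

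Your route avoids Claim~\ref{clm:ends} and the ``we may assume the subpath alternates'' steps entirely. It also proves more: every vertex of colour $3$, of any degree, satisfies $c(N(x))=\{1,2\}$. So your observation that $x$ has degree $2$ is not actually needed. Your closing remark about parity is harmless commentary, not part of the argument.
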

    \begin{claimproof}
        By Claim~\ref{clm:ends}, $c(y)=c(z) \in \{1,2\}$. Let $a,b$ be the neighbours of $x$, such that $a$ is on the subpath $P^y$ of $P$ between $y$ and $x$ and $b$ is on the subpath $P^z$ of $P$ between $z$ and~$x$. Without loss of generality, we may assume that $P^z$ has even length, because $P$ has odd length by construction. Using Claim~\ref{clm:lowerbase} on the subpath of $P$ between $b$ and $z$ and the fact that $c(z) \in \{1,2\}$, we may assume that the colours used by $c$ on this subpath are $1$ and $2$ only, alternatingly, or we can obtain a colouring of strictly smaller sum than $c$. Hence, $c(b) \in \{1,2\} \setminus \{c(z)\}$. Similarly, the internal vertices of $P^y$ form a path of odd length. Using that $c(y) \in \{1,2\}$ and Claim~\ref{clm:lowerbase}, we may assume that the colours used by $c$ on this subpath are $1$ and $2$ only, alternatingly, or we can obtain a colouring of strictly smaller sum than $c$. Hence, $c(a)\in\{1,2\} \setminus\{3-c(y)\}$. As $c(y)=c(z)$, $c(N(x)) = \{1,2\}$.
    \end{claimproof}

    \begin{claim}\label{clm:nothrees}
        For each $uv \in E(G)$, if $x \in V(G'_{uv})$ has $c(x) = 3$, 
        then~$x$~has~degree~$1$~or~$3$~in~$G'$.
    \end{claim}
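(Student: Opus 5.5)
Note first which vertices of $G'$ have degree~$2$. Since $G$ is cubic and $\ell>0$, the vertices $u,v,p,q,r$ of an edge gadget have degree~$3$ in $G'$, $s$ has degree~$1$, and every other vertex is an internal vertex of some gadget path. So it suffices to show that no internal vertex $x$ of a gadget path $P$ of $G'_{uv}$ has $c(x)=3$. Suppose for a contradiction that one does. I will build a colouring $c'$ whose sum is at most that of $c$ and which has strictly fewer degree-$2$ vertices coloured larger than~$2$. This contradicts the lexicographic choice of $c$: minimum sum first, then the minimum number of such degree-$2$ vertices.

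The first step is to pin down $c$ on $P$. Let $y,z$ be the ends of $P$. By Claim~\ref{clm:ends}, $c(y)=c(z)=\beta\in\{1,2\}$. As in the proof of Claim~\ref{clm:threeneighb}, and by the minimality of $c$ together with Claim~\ref{clm:lowerbase}, the vertices of $P$ other than $x$ are coloured alternately $1,2$ on the two subpaths $P^y$ and $P^z$. I will also need that $x$ is the only vertex of colour~$3$ on $P$; I expect this to follow from the same alternation argument, and otherwise a direct recolouring of $P$ lowers the sum. Take $P^z$ of even length. The segment between $x$ and $z$ then has an even number $2j$ of vertices and sum $3j$. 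The segment between $y$ and $x$ has $2k+1$ vertices, starts and ends with colour $3-\beta$, and has sum $3k+3-\beta$, where $j+k=\ell-1$. Hence the internal vertices of $P$ have total sum
\[ 3+3(\ell-1)+(3-\beta)=3\ell+3-\beta. \]

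The second step is to move the colour~$3$ onto the end $y$, which has degree~$1$ or~$3$. Recolour $y$ with~$3$, and recolour the $2\ell$ internal vertices of $P$ alternately, starting with $\beta$ next to $y$ and ending with $3-\beta$ next to $z$. This is proper along $P$, because $c(z)=\beta$. The internal sum drops to $3\ell$, a saving of $3-\beta$, while $y$ costs exactly $3-\beta$ more. So the total sum is unchanged, and the number of degree-$2$ vertices with colour larger than~$2$ strictly decreases.

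The main obstacle is that the new colour $3$ at $y$ may conflict with other neighbours of $y$. Since $\ell>0$, every neighbour of $y$ is an internal vertex of some gadget path. Suppose a neighbour $w\ne$ (the neighbour of $y$ on $P$) has $c(w)=3$, and let $P'$ be the gadget path containing $w$. Claim~\ref{clm:ends} applied to $P'$ shows that both ends of $P'$ have colour $c(y)=\beta$. I then apply the same shift simultaneously to every gadget path through $y$ that contains an internal vertex of colour~$3$ adjacent to $y$, together with $P$. There are at most three such paths, and each shift is as computed above. The cost of recolouring $y$ is paid only once, namely $3-\beta$, while each of the $t\ge 2$ paths saves $3-\beta\ge 1$. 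So the sum strictly decreases, which contradicts the minimality of the sum of $c$.

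After this, no neighbour of $y$ other than those handled keeps colour~$3$, so the recolouring is proper. The case $y=s$ of degree~$1$ is immediate. In every case we contradict the choice of $c$, which proves the claim.
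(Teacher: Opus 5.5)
Your argument is correct and is essentially the paper's. You push the colour~$3$ from the degree-$2$ vertex onto an end of its gadget path, which has degree~$1$ or~$3$, recover the cost $3-\beta$ on the path, and invoke the lexicographic tie-break, getting a strict decrease in the sum when the end already has $3$-coloured neighbours. The paper resolves such conflicts by a local swap instead of re-alternating whole paths: it recolours the $3$-coloured neighbours of the end to $\beta$ via Claim~\ref{clm:threeneighb}. Your explicit count $3\ell+3-\beta$ also makes precise a saving the paper only attributes to Claim~\ref{clm:lowerbase}, which by itself guarantees a saving of just~$1$. One small labelling slip: with exclusive segments it is $P^y$, not $P^z$, that has even length, but the total is symmetric, so nothing breaks.
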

    \begin{claimproof}
        Let $x \in V(G'_{uv})$ be such that $c(x) = 3$. Since $G'$ is subcubic, $x$ has degree at most~$3$. Suppose that the degree of $x$ is~$2$.
        By the construction of $G'_{uv}$, $x$ is an internal vertex of a gadget path $P^1$. Let $y_1,z$ be the ends of $P^1$, where we use $z$ to denote a vertex of $P^1$ that is not in $V(G)$ and has degree~$3$, which exists by construction. By Claim~\ref{clm:ends}, $c(y_1)=c(z) \in \{1,2\}$.

        If $z$ has at least one neighbour of colour~$3$, recolour $z$ to~$3$ and recolour all vertices in $\Gamma^3 = \{a \in N(z) \mid c(a) = 3\}$ to colour $c(z)$. By Claim~\ref{clm:threeneighb}, every vertex in $N(\Gamma^3) \setminus \{z\}$ has colour not equal to $c(z)$. Hence, this yields a valid colouring $c'$. Moreover, either the sum of $c'$ is smaller than that of $c$ (if $|\Gamma^3| > 1$) or the sum has remained the same but the number of vertices of degree~$2$ with colour $3$ has decreased (if $|\Gamma^3| = 1$), a contradiction.

        Hence, $z$ has no neighbours of colour~$3$. Now let $P^1,P^2,P^3$ be the gadget paths incident on $z$. Recolour $z$ to have colour~$3$ and recolour the internal vertices of $P^1,P^2,P^3$ to $1$ and $2$, alternatingly. This yields a valid colouring $c'$ of $G'$, as $z$ receives colour~$3$. By Claim~\ref{clm:lowerbase} and because some vertex of $P^1$ was assigned colour~$3$ by $c$, either the sum of $c'$ is smaller than that of $c$ or the sum has remained the same, but the number of vertices of degree~$2$ with colour~$3$ has decreased, a contradiction. Hence, the degree of $x$ cannot be~$2$. 
    \end{claimproof}
Claim~\ref{clm:nothrees} implies that no vertex of $G'$ of degree~$2$ receives colour~$3$ under $c$. We henceforth use this fact implicitly.

    \begin{claim} \label{clm:notwothrees}
        For every $uv \in E(G)$, no distinct $x,y \in V(G'_{uv}) \setminus \{u,v\}$ have $c(x)=c(y)=3$.
    \end{claim}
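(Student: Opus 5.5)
We will argue by contradiction, comparing $c$ on the edge gadget $G'_{uv}$ with an explicit replacement colouring. The replacement agrees with $c$ outside $G'_{uv}\setminus\{u,v\}$, has sum at most that of $c$, introduces no colour $3$ on degree-$2$ vertices, and has strictly fewer degree-$1$/degree-$3$ vertices with colour larger than~$2$. This contradicts the lexicographic choice of $c$.

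\emph{Step 1: locating the colour-$3$ vertices.} By Claim~\ref{clm:nothrees}, a vertex $x\in V(G'_{uv})\setminus\{u,v\}$ with $c(x)=3$ has degree $1$ or $3$, so $x\in\{p,q,r,s\}$. We first rule out $c(s)=3$. The unique neighbour of $s$ has degree~$2$, so it has a colour in $\{1,2\}$. Recolouring $s$ to the other colour of $\{1,2\}$ strictly decreases the sum. Hence any two colour-$3$ vertices $x,y$ lie in $\{p,q,r\}$.

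\emph{Step 2: structure of $c$ on the gadget paths.} Every internal vertex of a gadget path has degree~$2$, so it is coloured $1$ or $2$. A gadget path has $2\ell$ internal vertices, so they alternate in colour, and the two internal vertices adjacent to the ends receive distinct colours. Consequently, if both ends of a gadget path have colours in $\{1,2\}$, these colours differ. Moreover, for every gadget path, the sum over its internal vertices is exactly $3\ell$. Therefore the sum of $c$ on $V(G'_{uv})\setminus\{u,v\}$ is $18\ell+c(p)+c(q)+c(r)+c(s)$. If two of $p,q,r$ have colour~$3$, this is at least $18\ell+3+3+1+1=18\ell+8$.

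\emph{Step 3: the replacement colouring.} For each possible pair $(c(u),c(v))\in\{1,2,3\}^2$, we exhibit an assignment to $p,q,r,s$ that satisfies the following:
\begin{itemize}
\item at most one of $p,q,r,s$ receives colour~$3$;
\item each gadget path has ends of distinct colours or at least one end of colour~$3$, so its internal vertices can be coloured alternately with $1,2$;
\item $c(p)+c(q)+c(r)+c(s)\le 8$.
\end{itemize}
The case $c(u)=c(v)=1$ is handled by $c^1$, with sum $8$. The cases $c(u)\ne c(v)$ in $\{1,2\}$ are handled by $c^2$ and $c^3$, with sum $7$. The case $c(u)=c(v)=2$ is handled by $c^4$, with sum $7$. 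The cases where $u$ or $v$ has colour $3$ are handled similarly. For example, if $c(u)=c(v)=3$, take $p=1$, $q=2$, $r=3$, $s=1$, with sum $7$. If exactly one of them has colour $3$, take the $c^2$/$c^3$/$c^4$-type pattern with $q=3$, or with $r=3$ and $p,q\in\{1,2\}$ chosen to avoid the colour of the other end. We colour the internal vertices of all six gadget paths alternately with $1$ and $2$.

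This yields a valid colouring of $G'$. Its sum is at most that of $c$, since $8\le c(p)+c(q)+c(r)+c(s)$ by Step 2. It has no degree-$2$ vertex of colour~$3$. It has strictly fewer vertices of degree $1$ or $3$ with colour larger than~$2$, because we went from at least two such vertices in the gadget to at most one. This contradicts the choice of $c$.

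\emph{Main obstacle.} The only delicate point is the case analysis of Step 3 when $u$ or $v$ has colour~$3$. There one must check that a pattern with a single $3$ and sum at most $8$ exists for every combination. I would handle it by placing the unique $3$ on $r$ whenever possible, since $r$ is not adjacent (via a gadget path) to $u$ or $v$. I would then choose $p,q\in\{1,2\}$ distinct, with $p\ne c(u)$ and $q\ne c(v)$ when these lie in $\{1,2\}$. This is always possible, because at most one of $u,v$ has a constraining colour in that case, and otherwise any distinct $p,q$ work. Finally, set $s=1$, which gives sum $1+2+3+1=7$.
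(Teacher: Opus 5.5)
Your proof is correct and follows essentially the paper's argument. You bound the sum on $V(G'_{uv})\setminus\{u,v\}$ below by $18\ell+8$, then replace it by a $c^1$--$c^4$-style pattern with a single colour-$3$ vertex, contradicting the lexicographic minimality of $c$. Your explicit treatment of the cases $c(u)=3$ or $c(v)=3$ fills in a detail the paper leaves implicit, and your separate exclusion of $c(s)=3$ is harmless but not needed, since any two colour-$3$ vertices among $p,q,r,s$ already give the bound $18\ell+8$.
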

    \begin{claimproof}
        Suppose that $x,y \in V(G'_{uv}) \setminus \{u,v\}$ have $c(x)=c(y)=3$. By Claim~\ref{clm:nothrees}, $x$ and $y$ have degree~$1$ or~$3$ in $G'$. Consider the vertices $p_{uv},q_{uv},r_{uv},s_{uv}$ of $G'_{uv}$. By construction, $p_{uv},q_{uv},r_{uv},s_{uv},u,v$ are the only vertices in $V(G'_{uv})$ that have degree~$1$ or~$3$ in $G'$. Moreover, using Claim~\ref{clm:lower} and $c(x)=c(y)=3$, it follows that the restriction of $c$ to $V(G'_{uv}) \setminus\{u,v\}$ uses at least $18\ell+8$ colours. But then we can recolour the vertices of $V(G'_{uv}) \setminus \{u,v\}$ according to $c^1$, $c^2$, $c^3$, or $c^4$ (use one of the latter three if $c(u)=3$ or $c(v)=3$) and obtain a valid colouring $c'$ of $G'$. Note that either the sum of $c'$ is lower than that of $c$, or the sums are equal and the number of vertices of degree~$2$ with colour~$3$ is equal (zero) while $c'$ colours less vertices of degree~$1$ or~$3$ of $G'$ with colour $3$ than $c$. This is a contradiction. Hence, no distinct $x,y \in V(G'_{uv}) \setminus \{u,v\}$ have $c(x)=c(y)=3$.
    \end{claimproof}

    \begin{claim} \label{clm:lowerseven}
        For every $uv \in V(G'_{uv})$, at least two vertices among $p_{uv},q_{uv},r_{uv}$ have colour at least~$2$ under~$c$ and at least one of them has colour~$3$ under~$c$. Hence, the sum of the restriction of $c$ to $V(G'_{uv}) \setminus \{u,v\}$ is at least $18\ell+7$.
    \end{claim}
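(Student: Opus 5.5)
The plan is to use the two facts already established for the lexicographically minimal colouring $c$. By Claim~\ref{clm:subset}, every colour lies in $\{1,2,3\}$. By Claim~\ref{clm:nothrees}, no vertex of degree~$2$ receives colour~$3$. We may assume $\ell>0$, so every internal vertex of a gadget path has degree~$2$ in $G'$ and is therefore coloured $1$ or $2$. From this we derive a parity constraint on the ends of each gadget path.

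\emph{Parity step.} Let $P$ be a gadget path with ends $y,z$. It has length $2\ell+1$, so it has exactly $2\ell\geq 2$ internal vertices. These are properly coloured with colours in $\{1,2\}$, so they alternate. If the internal vertex adjacent to $y$ has colour $a$, then the internal vertex adjacent to $z$ has colour $3-a$, because the number of internal vertices is even. Suppose now that $c(y),c(z)\in\{1,2\}$. Then $c(y)=3-a$ and $c(z)=a$, so $c(y)\neq c(z)$. In words: two ends of a gadget path that both use colours from $\{1,2\}$ must receive different colours.

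\emph{Applying it to $p,q,r$.} The vertices $p_{uv},q_{uv},r_{uv}$ are pairwise joined by the gadget paths $P^{pq}_{uv}$, $P^{qr}_{uv}$ and $P^{pr}_{uv}$. If none of the three had colour~$3$, all three would be coloured from $\{1,2\}$ and, by the parity step, pairwise distinct. That is impossible, so at least one of them has colour~$3$. Similarly, if two of them had colour~$1$, they would be two ends of a gadget path coloured equally from $\{1,2\}$, again contradicting the parity step. Hence at most one of $p,q,r$ has colour~$1$, so at least two have colour at least~$2$.

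\emph{Sum bound.} Of $p,q,r$, one has colour~$3$ and at least two have colour at least~$2$, so together they contribute at least $3+2+1=6$. The vertex $s_{uv}$ contributes at least~$1$. By Claim~\ref{clm:lower}, the internal vertices of the six gadget paths contribute at least $18\ell$. Together with $p,q,r,s$, these are exactly the vertices of $V(G'_{uv})\setminus\{u,v\}$, so that set has sum at least $18\ell+7$.

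The only delicate point is the parity step. It relies on the internal path having an even number of vertices, which holds because each gadget path has odd length $2\ell+1$. It also relies on Claim~\ref{clm:nothrees} to exclude colour~$3$ on internal vertices. Beyond that, the argument is direct case checking.
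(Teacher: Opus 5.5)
Your proof is correct and follows essentially the same route as the paper. The paper also uses the odd cycle formed by $P^{pq}_{uv},P^{qr}_{uv},P^{pr}_{uv}$, whose internal vertices cannot take colour~$3$ by Claim~\ref{clm:nothrees}, to force colour~$3$ onto one of $p_{uv},q_{uv},r_{uv}$; it then uses parity along a gadget path to exclude two of them having colour~$1$, and Claim~\ref{clm:lower} to obtain the bound. You additionally spell out the parity step and the final count $3+2+1+1$ (including $s_{uv}$), which the paper leaves implicit.
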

    \begin{claimproof}
        Since the gadget paths $P^{pq},P^{pr},P^{qr}$ induce an odd cycle in $G'$, at least one of $p_{uv},q_{uv},r_{uv}$ has colour~$3$. Say $c(q_{uv})=3$. If $c(p_{uv})=c(r_{uv}) = 1$, then an internal vertex of $P^{pr}$ must have colour~$3$ under $c$, a contradiction to Claim~\ref{clm:nothrees}. Using Claim~\ref{clm:lower}, the sum of the restriction of $c$ to $V(G'_{uv}) \setminus\{u,v\}$ is at least $18\ell + 7$.
    \end{claimproof}

    \begin{claim} \label{clm:indep}
        For every $uv \in E(G)$, it is not true that $c(u)=c(v) = 1$.
    \end{claim}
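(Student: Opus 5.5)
We argue by contradiction: suppose $c(u)=c(v)=1$ for some $uv\in E(G)$. First we pin down $c$ on the gadget $G'_{uv}$, and then we recolour $u$ and the gadgets around it.

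\emph{Step 1 (alternation on gadget paths).} By Claims~\ref{clm:subset} and~\ref{clm:nothrees}, every internal vertex of a gadget path gets colour $1$ or $2$. An induced path coloured only with $1,2$ must alternate. Each gadget path has odd length $2\ell+1$, so it has $2\ell$ internal vertices, and its first and last internal vertices get different colours. This gives a simple rule for a gadget path with ends $y,z$. If $c(y)\in\{1,2\}$, its first internal vertex has colour $3-c(y)$ and its last internal vertex has colour $c(y)$. Hence, if $c(z)\in\{1,2\}$ as well, then $c(z)\neq c(y)$.

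\emph{Step 2 (forced colours in $G'_{uv}$).} Apply the rule to $P^{up}_{uv}$ with $c(u)=1$: this gives $c(p)\neq 1$, so $c(p)\in\{2,3\}$. Similarly, $P^{qv}_{uv}$ gives $c(q)\in\{2,3\}$. By Claim~\ref{clm:notwothrees}, $p$ and $q$ are not both coloured $3$. If both were coloured $2$, the rule applied to $P^{pq}_{uv}$ would be violated. So one of $p,q$ has colour $3$ and the other colour $2$.

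Next, $r$ cannot have colour $3$ by Claim~\ref{clm:notwothrees}. The rule applied to the path from $r$ to whichever of $p,q$ has colour $2$ gives $c(r)\neq 2$, so $c(r)=1$. The rule on $P^{rs}_{uv}$ gives $c(s)\neq 1$, and $c(s)\neq 3$ again by Claim~\ref{clm:notwothrees}, so $c(s)=2$.

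Thus $c(p)+c(q)+c(r)+c(s)=8$. Together with Claim~\ref{clm:lower}, the sum of $c$ on $V(G'_{uv})\setminus\{u,v\}$ is at least $18\ell+8$.

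\emph{Step 3 (exchange).} Define $c'$ from $c$ as follows.
\begin{itemize}
\item Recolour $u$ to $2$; this raises the sum by $1$.
\item Recolour the gadget $G'_{uv}$ by $c^2$ (with roles chosen so that $v$ keeps colour $1$); this costs $18\ell+7$, saving at least $1$.
\item For every other edge $uw\in E(G)$, recolour $V(G'_{uw})\setminus\{u,w\}$ by a colouring of cost exactly $18\ell+7$ with exactly one vertex (some branch vertex) of colour $3$, compatible with $c(u)=2$ and the current $c(w)$:
  \begin{itemize}
  \item if $c(w)=1$, use $c^2$ or $c^3$;
  \item if $c(w)=2$, use $c^4$;
  \item if $c(w)=3$, use $p=1$, $q=2$, $r=3$, $s=1$ with alternating paths, which is valid and has cost $18\ell+7$.
  \end{itemize}
  By Claim~\ref{clm:lowerseven}, each such gadget previously cost at least $18\ell+7$, so the sum does not increase.
\end{itemize}
The resulting $c'$ is a valid colouring, since all gadget-path ends differ in colour, so every path can be alternated. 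Moreover, $c'$ has sum at most that of $c$.

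If the sum strictly drops, or if the number of degree-$1$ or degree-$3$ vertices coloured $3$ strictly drops, this contradicts the choice of $c$. In both cases no degree-$2$ vertex has colour $3$, so the first tie-breaker is equal, and the count of degree-$1$ or degree-$3$ vertices coloured $3$ does not increase: each gadget had at least one such vertex before and has exactly one after. The only remaining case is an exact tie on all three criteria. There the plan is to strengthen the choice of $c$ by a fourth tie-breaker: additionally minimise the number of edges $uv\in E(G)$ with $c(u)=c(v)=1$. The exchange preserves the first three criteria and removes the edge $uv$. It creates no new such edge, because $u$ now has colour $2$ and no vertex of $V(G)$ changed colour to $1$. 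This yields the contradiction.

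\emph{Main obstacle.} The hard part is this tie case. The quantities in Step 3 balance exactly: $+1$ for $u$ and $-1$ for $G'_{uv}$. So the existing lexicographic order may not strictly decrease, and I expect the argument to need the extra tie-breaker (or an equivalent ``we may assume'' modification of $c$). I would check carefully that this extra criterion does not invalidate any of the earlier claims. Those claims all used only the first three criteria and produced strict improvements in them, so they should remain valid.
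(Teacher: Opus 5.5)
Your proof is correct. Up to the exchange it is the paper's argument: the same forcing $\{c(p_{uv}),c(q_{uv})\}=\{2,3\}$, $c(r_{uv})=1$, $c(s_{uv})\geq 2$, giving cost at least $18\ell+8$ on $G'_{uv}$, and then the same recolouring of one endpoint to $2$ together with its three gadgets (the paper recolours $v$, you recolour $u$).

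The difference is at the last step, and it is in your favour. The paper concludes that the exchange yields a colouring of \emph{strictly} smaller sum. This forgets the $+1$ paid for raising the endpoint from colour $1$ to $2$. The balance is $+1-1+0$, so only the non-strict inequality you derived holds.

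The tie actually occurs. Take $G=K_4$ on $\{a,b,x,y\}$. Colour $a,b$ with $1$ and $x,y$ with $2$, colour $G'_{ab}$ by $c^1$, and colour every other gadget by $c^2$, $c^3$ or $c^4$. This colouring has sum $7+6(18\ell+7)$. That is exactly the sum of the colouring built from the maximum independent set $\{a\}$, and it is the minimum possible. Both colourings have no degree-$2$ vertex of colour $3$ and exactly one vertex of colour $3$ per gadget, which is the minimum allowed by Claim~\ref{clm:lowerseven}. So the paper's three criteria permit $c(a)=c(b)=1$, and the claim as literally stated fails.

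Your fourth tie-breaker is therefore a necessary repair, not a convenience. Your argument that it is harmless is right: every earlier claim reaches its contradiction through a strict lexicographic improvement in the first three criteria, and this stays strict when a fourth criterion is appended.

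An alternative repair bypasses the claim entirely. Let $I$ be the set of vertices of $G$ coloured $1$. Your Step~2 shows that every edge of $G[I]$ adds at least $1$ to the cost of its gadget. Hence the sum is at least $(18\ell+7)m+2n-|I|+|E(G[I])|$, which gives $|I|-|E(G[I])|\geq k$. Deleting one endpoint of each edge of $G[I]$ then leaves an independent set of size at least $k$.
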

    \begin{claimproof}
        Suppose that $c(u) = c(v) = 1$. Consider $G'_{uv}$ and the four vertices $p_{uv},q_{uv},r_{uv},s_{uv}$. We will argue that the sum of the restriction of $c$ to $V(G'_{uv})$ is at least $18\ell+8$. Indeed, by Claim~\ref{clm:nothrees}, no internal vertex of a gadget path of $G'_{uv}$ has been assigned colour~$3$. Hence, $c(p_{uv}) \not= 1$ and $c(q_{uv}) \not= 1$. Similarly, the path $P^{pq}$ ensures that $p_{uv},q_{uv}$ do not both receive colour~$2$ under $c$. By Claim~\ref{clm:notwothrees}, $p_{uv},q_{uv}$ do not both receive colour~$3$ under $c$. Assume that $c(p_{uv}) = 2$ and thus $c(q_{uv}) = 3$. Following similar reasoning as before, the path $P^{pr}$ ensures that $c(r_{uv}) \not= 2$. If $c(r_{uv}) = 3$, then we contradict Claim~\ref{clm:notwothrees}. Hence, $c(r_{uv}) = 1$. Then the path $P^{rs}$ ensures that $c(s_{uv}) \geq 2$, and thus by Claim~\ref{clm:lower}, the sum of the restriction of $c$ to $V(G'_{uv})$ is at least $18\ell+8$. The same arguments hold assuming that $c(p_{uv}) = 3$ and thus $c(q_{uv}) = 2$. Hence, the sum of the restriction of $c$ to $V(G'_{uv})$ is at least $18\ell+8$.

        Now let $u',u''$ be the neighbours of $v$ other than $u$ in $G$. Recolour $v$ to colour~$2$ and recolour the vertices $V(G'_{uv}) \setminus \{u,v\}$ according to $c^3$. Recolour $V(G'_{u'v}) \setminus \{u',v\}$ and $V(G'_{u''v}) \setminus \{u'',v\}$ according to $c^2,c^3,c^4$ depending on the colour of $u',u''$ (note that this is also possible if $c(u') = 3$ or $c(u'') = 3$). By Claim~\ref{clm:lowerseven} 
        because
        the sum of the restriction of $c$ to $V(G'_{uv})$ is at least $18\ell+8$, while $c^2,c^3,c^4$ only sum to $18\ell+7$ for the internal vertices of an edge gadget, this yields a valid colouring of $G'$ of strictly lower sum than $c$, a contradiction.
    \end{claimproof}
    Now let $I = \{u \in V(G) \mid c(u) = 1\}$.
    By Claim~\ref{clm:indep}, $I$ is an independent set of $G$.
    Hence, the sum of the restriction of $c$ to the vertices of $G$ is at least $|I|+2(n-|I|) = 2n-|I|$.
    The sum of the restriction of $c$ to the internal vertices of each edge gadget is at least~$18\ell+7$ by Claim~\ref{clm:lowerseven}.
    Recall that $k' = (18\ell+7)m + 2n - k$ and that the sum of $c$ is at most $k'$.
    So $2n-|I| + (18\ell+7)m \leq k' = (18\ell+7)m + 2n - k$. Hence, $|I| \geq k$.
    Thus, $(G,k)$ is a yes-instance of {\sc Independent Set}. The theorem follows.
\end{proof}

\section{Chromatic Sum on \texorpdfstring{$(C_5,4P_1,2P_1+P_2,2P_2)$}{(C5,4P1,2P1+P2,2P2)}-Free Graphs}\label{s-2}

For our next result, we reduce from {\sc Colouring}, which is known to be \NP-complete for this class~\cite{KKTW01} (a slightly different reduction was presented in~\cite{PPR19} to prove hardness for detecting a critical vertex; however, it does not yield any stronger result for {\sc Chromatic Sum}).

\begin{theorem}\label{t-kr}
    {\sc Chromatic Sum} is \NP-complete for $(C_5,4P_1,2P_1+P_2,2P_2)$-free graphs.
\end{theorem}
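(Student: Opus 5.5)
The plan is a polynomial reduction from {\sc Colouring} on $(C_5,4P_1,2P_1+P_2,2P_2)$-free graphs, which is \NP-complete by~\cite{KKTW01}. The instance graph $G$ itself should serve as the {\sc Chromatic Sum} instance, so that $(C_5,4P_1,2P_1+P_2,2P_2)$-freeness is inherited for free. All the work then goes into choosing the threshold $k'$ so that a colouring of small sum exists exactly when $G$ is $k$-colourable. Membership in \NP{} is trivial.

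The key observation is that $G$ is $4P_1$-free, so every colour class has size at most~$3$. For a colouring with classes $V_1,\ldots,V_r$, reorder the colours so that $|V_1|\geq|V_2|\geq\cdots\geq|V_r|$; this never increases the sum. Writing $S_j=|V_1|+\cdots+|V_j|$, the sum equals $\sum_i i|V_i|=\sum_{j\geq 0}(n-S_j)$, summed over $j$ with $S_j<n$. Since $S_j\leq 3j$, each term is at least $n-3j$. Hence the sum is at least $\sum_{j=0}^{t-1}(n-3j)$ whenever $n=3t$, and equality holds if and only if $S_j=3j$ for all $j\leq t$. That is exactly the case in which the colouring is a partition of $V(G)$ into $t$ independent triples. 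So if $n=3t$, we get $\chi(G)\leq t$ if and only if $G$ has chromatic sum at most $k'=3t(t+1)/2$.

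It therefore suffices that {\sc Colouring} stays \NP-complete on this class for instances $(G,k)$ with $|V(G)|=3k$, i.e.\ the problem of partitioning $V(G)$ into independent triples. I expect this to be the main obstacle. First I would inspect the reduction in~\cite{KKTW01}. As is typical for $4P_1$-free hardness proofs, it encodes a partition or exact-cover problem in the complement, and I expect it to produce precisely such instances, or instances in which any $k$-colouring must use classes of size exactly~$3$.

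If that fails, I would pad the instance to reach $n=3k$, preserving all four forbidden induced subgraphs. The general shape is a colouring of $G$ in which every class has size $\leq 3$, with prescribed numbers of classes of each size. Failing padding, I would instead compute the exact minimum of $\sum_j (n-S_j)$ over all profiles with $S_j\leq 3j$ and $S_k=n$, and take $k'$ to be that value. Unlike the $n=3t$ case, this requires checking that optimality forces exactly $k$ classes. Any padding gadget has to be verified against the forbidden subgraphs. For example, a new vertex adjacent to everything creates no induced $C_5$, $4P_1$, $2P_2$ or $2P_1+P_2$, since none of these has a universal vertex. That check, together with confirming that the source hardness result provides instances of the required shape, is where most of the care would go.
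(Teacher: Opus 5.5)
Your counting argument is correct: for a $4P_1$-free graph on $n=3t$ vertices, the chromatic sum is at most $3t(t+1)/2$ exactly when $V(G)$ splits into $t$ independent triples, i.e.\ when $\chi(G)\le t$. The gap is that your whole reduction then rests on {\sc Colouring} staying \NP-complete on $(C_5,4P_1,2P_1+P_2,2P_2)$-free graphs for instances with $n=3k$. Equivalently, this is Partition into Triangles on the complements, which are $(K_4,C_4,C_5)$-free and diamond-free. You do not prove this; you only expect it. The natural source of hardness for this class does not support that expectation. Take $G=\overline{L(F)}$ for a subcubic graph $F$ of girth at least~$6$. Then $G$ lies in the class, its colour classes are stars of $F$, and $\chi(G)$ is the vertex cover number of $F$. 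There $n=|E(F)|$ is in general far from $3k$. If you impose $n=3k$, the question becomes whether $E(F)$ decomposes into claws. For subcubic triangle-free $F$ this only asks whether each non-trivial component of $F$ is bipartite with one side consisting of degree-$3$ vertices, which is a polynomial-time check. So you would need a genuinely new hardness proof for partitioning into independent triples.

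Your fallbacks do not close the gap. Adding a universal vertex increases both $n$ and $k$ by one, so $3k-n$ moves away from $0$, and you specify no other padding. Choosing $k'$ as the best value over admissible size profiles fails outright, because the chromatic sum depends on which profiles $G$ actually realises, not just on $\chi(G)$. For example, the octahedron $K_{2,2,2}$ lies in the class (its complement is $3P_2$) and is $3$-colourable. Every colour class has size at most $2$, so its chromatic sum is $12$. The best profile with at most three classes gives $9$, or $10$ if exactly three nonempty classes are required.

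The missing idea is to make the \emph{number} of colours used on $G$ dominate the sum, whatever the class sizes. The paper adds a clique $K$ of $p=n(n+1)/2$ vertices complete to $V(G)$, which preserves the class by the universal-vertex observation you already made. Vertices of $K$ must avoid every colour used on $G$. So after sorting classes, using $k+1$ or more colours on $G$ costs at least $p$ extra on $K$, more than a minimum-sum $k$-colouring of $G$ costs in total. With threshold $p+\sum_{i=1}^{p}(k+i)$ this captures $\chi(G)\le k$ exactly, for arbitrary instances from~\cite{KKTW01}.
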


\begin{proof}
    We reduce from {\sc Colouring} restricted to
    $(C_5,2P_2,2P_1+P_2,4P_1)$-free graphs. As mentioned, Král et al.~\cite{KKTW01} proved that {\sc Colouring} is \NP-complete for this graph class.

    Let $(G,k)$ be an instance of {\sc Colouring}, where $G$ is a $(C_5,2P_2,2P_1+P_2,4P_1)$-free graph.
    Let $n = |V(G)|$, and let $K$ be a clique on $p$ new vertices, where $p=\frac{1}{2}n(n+1)$.
    Make every vertex of $K$ adjacent to every vertex of $G$, and let $G'$ be the resulting graph.
    Since every graph in $\{C_5,2P_2,2P_1+P_2,4P_1\}$ has no universal vertex, and $G$ is $(C_5,2P_2,2P_1+P_2,4P_1)$-free, the graph $G'$ is also $(C_5,2P_2,2P_1+P_2,4P_1)$-free.

    We set $q=\sum_{i=1}^p(k+i)$.
    We claim that $G$ is $k$-colourable if and only if
    $G'$ has chromatic sum at most $p+q$.
    First suppose $G$ is $k$-colourable.
    Among all $k$-colourings of $G$, choose one of minimum sum.
    Its sum is at most $1+\cdots+n=p$, as colouring the vertices injectively with colours $1,\ldots,n$ gives a valid colouring of $G$.
    We give the vertices of $K$ the colours $k+1,\ldots,k+p$.
    This gives a colouring of $G'$ with sum at most $p+q$.

    To prove the converse, let $c'$ be a colouring of $G'$ with sum at most $p+q$.
    Since $K$ is a clique complete to $V(G)$, the vertices of $K$ receive $p$ distinct colours, none of which are used on $V(G)$.
    Let $k_G$ be the number of colours used by the restriction of $c'$ to $G$.
    By permuting colours without increasing the sum, we may assume that the colour classes contained in $V(G)$ receive the lowest colours, since every colour class meeting $K$ is a singleton.

    Suppose that $k_G\geq k+1$, and so the contribution of the vertices of $K$ is at least $\sum_{i=1}^p(k+1+i) = q+p$.
    The contribution of the vertices of $G$ is at least $|V(G)|$. Hence, the total sum is at least $p+q+|V(G)|>p+q$, a contradiction.
    Therefore, $k_G\leq k$, and hence the restriction of $c'$ to $G$ uses at most $k$ colours.
    This proves the theorem.
\end{proof}

\section{Chromatic Sum on \texorpdfstring{$(P_3+P_1)$}{(P3+P1)}-Free Graphs}\label{a-c7}

In this section, we prove that {\sc Chromatic Sum} 
is polynomial-time solvable for $(P_3+P_1)$-free graphs.
For this, we will use the following characterization of paw-free graphs due to Olariu~\cite{Ol88}.
Here, the {\it paw} is the graph obtained by adding a pendant vertex to a triangle, that is, the graph with vertices $u_1,u_2,u_3,v$ and edges
$u_1u_2, u_2u_3, u_3u_1, u_1v$.

\begin{lemma}[\cite{Ol88}]\label{l-olariu}
    A graph $G$ is paw-free if and only if each connected component of $G$ is $C_3$-free or complete multi-partite.
\end{lemma}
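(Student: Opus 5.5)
We prove the two directions of Lemma~\ref{l-olariu} separately.

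\emph{The ``if'' direction.} Since the paw is connected, $G$ is paw-free if and only if each of its connected components is paw-free. So it suffices to check two things. First, a $C_3$-free graph is paw-free, because the paw contains a triangle. Second, a complete multi-partite graph is paw-free. Complete multi-partite graphs are exactly the $\overline{P_3}$-free graphs, i.e.\ the $(P_1+P_2)$-free graphs: non-adjacency is then an equivalence relation whose classes are the parts. In the paw, the pendant vertex $v$ together with the edge $u_2u_3$ induces $P_1+P_2$. Hence no complete multi-partite graph contains an induced paw.

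\emph{The ``only if'' direction.} Let $G$ be a connected paw-free graph that contains a triangle; we show that $G$ is complete multi-partite. By the characterisation above, it suffices to show that $G$ is $(P_1+P_2)$-free. The main step is a propagation argument. Take a maximal vertex set $S$ with $C_3\subseteq_i G[S]$ (a triangle inside $S$) such that $G[S]$ is connected and complete multi-partite. Suppose $S\neq V(G)$, and take $x\notin S$ with a neighbour in $S$.

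We now analyse how $x$ attaches to the parts $V_1,\ldots,V_r$ of $G[S]$, where $r\geq 3$.
\begin{itemize}
\item Take a triangle $a,b,c$ in $S$ with $x$ adjacent to $a$. Paw-freeness on $\{a,b,c,x\}$ forces $x$ to be adjacent to at least one of $b$ and $c$. In other words, if $x$ has a neighbour $a\in V_i$, then for every edge $bc$ with $b\in V_j$, $c\in V_k$ and $i,j,k$ distinct, $x$ sees $b$ or $c$.
\item Using this repeatedly, together with paw-freeness on triangles formed by $x$ with two adjacent vertices of $S$, one shows that $x$ is adjacent to all vertices outside exactly one part $V_i$. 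Moreover, within that part $x$ is either adjacent to no vertex of $V_i$, or adjacent to all vertices of $S$. In the first case $x$ joins $V_i$; in the second case $x$ forms a new singleton part.
\end{itemize}
In either case $S\cup\{x\}$ is still complete multi-partite, contradicting the maximality of $S$. Since $G$ is connected, every vertex is eventually reached this way, so $S=V(G)$.

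The main obstacle is this case analysis on $N(x)\cap S$. It has to exclude configurations such as $x$ adjacent to only part of a part $V_i$ while also seeing vertices of other parts. I would handle this by exhibiting an explicit induced paw in each bad configuration. For example, if $x$ is adjacent to $a\in V_i$ but not to $a'\in V_i$, and $x$ is adjacent to $b\in V_j$ with $j\neq i$, then $x,a,b$ forms a triangle if $x$ sees both, and $a'$ is adjacent to $a$'s neighbour $b$ but not to $x$ or $a$. This gives a paw on $\{x,b,a',\ldots\}$ after choosing a third part $V_k$ suitably. I expect to check a few such subcases by hand.
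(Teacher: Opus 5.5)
The paper gives no proof of this lemma. It is quoted from Olariu~\cite{Ol88} and used only after complementation, as Corollary~\ref{c-olariu}. So there is no argument in the paper to compare against. Your proposal supplies a self-contained proof, and it is correct.

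The ``if'' direction is complete as written. Growing a maximal complete multi-partite set $S$ that contains a triangle is the right structure for the ``only if'' direction. Two small remarks: connectivity of $G[S]$ is automatic once it has at least two parts, and you never use the $(P_1+P_2)$-free reformulation in the ``only if'' direction, since the growth argument gives complete multi-partiteness directly.

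The step you leave as ``one shows'' does go through, using exactly the two paw configurations you name. Let $N=N(x)\cap S\neq\emptyset$, with parts $V_1,\ldots,V_r$ and $r\geq 3$.

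(1) By your first bullet, every triangle of $G[S]$ that meets $N$ meets it in at least two vertices.

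(2) All non-neighbours of $x$ in $S$ lie in a single part. Suppose $y\in V_i$ and $z\in V_j$, with $i\neq j$, are both non-adjacent to $x$. For every $w$ in a third part, the triangle $yzw$ forces $w\notin N$ by (1), so $N\subseteq V_i\cup V_j$. Now take $a\in N$, say $a\in V_i$. Then the triangle $azw$ meets $N$ only in $a$, contradicting (1). The case $a\in V_j$ is symmetric, using the triangle $ayw$.

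(3) No part is split. Suppose $a\in V_i\cap N$ and $y\in V_i\setminus N$, and pick any $b$ in another part. Then $b\in N$ by (2), and $\{x,a,b,y\}$ is an induced paw: $xab$ is a triangle and $y$ is pendant at $b$. So in your worked example no third part $V_k$ is needed; the four vertices $x,a,b,a'$ already form the paw.

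Hence $N=S$ or $N=S\setminus V_i$ for some $i$, which is exactly what your growth step requires.
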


\noindent
In fact, we will use an alternative but equivalent formulation of Lemma~\ref{l-olariu}.
We observe that $\overline{C_3}=3P_1$ and that $\overline{\mbox{paw}}=P_3+P_1$.
Hence, we can now formulate Lemma~\ref{l-olariu} as follows.

\begin{corollary}[\cite{Ol88}]\label{c-olariu}
    A graph $G$ is $(P_3+P_1)$-free if and only if, for each co-component $D$ of $G$, the subgraph $G[V(D)]$ is $3P_1$-free or a disjoint union of complete graphs.
\end{corollary}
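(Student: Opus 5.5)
The statement to prove is Corollary~\ref{c-olariu}. The plan is to derive it from Lemma~\ref{l-olariu} by complementation. The first step is to verify $\overline{\mbox{paw}}=P_3+P_1$. In the paw with edges $u_1u_2,u_2u_3,u_3u_1,u_1v$, the complement has edges $vu_2,vu_3$ only. So it is the path $u_2vu_3$ plus the isolated vertex $u_1$, as claimed. Since complementation preserves induced-subgraph containment ($F\ssi G$ iff $\overline F\ssi\overline G$), a graph $G$ is $(P_3+P_1)$-free if and only if $\overline G$ is paw-free.

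Next, apply Lemma~\ref{l-olariu} to $\overline G$. It says $\overline G$ is paw-free if and only if each connected component $D$ of $\overline G$ is $C_3$-free or complete multi-partite. The connected components of $\overline G$ are by definition the co-components of $G$. Moreover, $\overline{G}[V(D)]=D$, so $G[V(D)]=\overline{D}$.

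It remains to translate the two alternatives through complementation. The graph $D$ is $C_3$-free exactly when $\overline D=G[V(D)]$ is $\overline{C_3}=3P_1$-free. For the other alternative, $D$ is complete multi-partite with parts $V_1,\ldots,V_r$ exactly when, in $\overline D$, each $V_i$ is a clique and there are no edges between different parts. That is, $G[V(D)]$ is a disjoint union of complete graphs, and conversely the cliques of such a union serve as the parts. Combining these equivalences gives the corollary.

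No step is a real obstacle. The only point needing care is the identification of components of $\overline G$ with co-components of $G$, together with the fact that the induced subgraph of $G$ on $V(D)$ is exactly the complement of $D$. Both follow directly from the definitions in Section~\ref{s-pre}.
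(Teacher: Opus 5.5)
Your proof is correct and follows the paper's route: the paper also obtains the corollary by complementing Lemma~\ref{l-olariu}, using $\overline{C_3}=3P_1$ and $\overline{\mbox{paw}}=P_3+P_1$. You make explicit the steps the paper leaves implicit, namely that the components of $\overline G$ are the co-components of $G$ and that complete multi-partite graphs are exactly the complements of disjoint unions of complete graphs.
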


\noindent
We can now prove the following result:

\begin{theorem}\label{t-p3p1}
    {\sc Chromatic Sum} is polynomial-time solvable for $(P_3+P_1)$-free graphs.
\end{theorem}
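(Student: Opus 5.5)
Using Corollary~\ref{c-olariu}, the plan is to reduce {\sc Chromatic Sum} on a $(P_3+P_1)$-free graph $G$ to independent subproblems on its co-components, and then to solve each co-component type directly.

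\textbf{Combining co-components.} Let $D_1,\ldots,D_t$ be the co-components of $G$; they can be computed in polynomial time. Every vertex of $D_i$ is adjacent to every vertex of $D_j$ for $i\neq j$, so each colour class of a colouring of $G$ lies inside a single $G[V(D_i)]$. Consequently, a colouring of $G$ amounts to choosing, for each $i$, a partition of $V(D_i)$ into independent sets, together with an assignment of distinct colours to all classes obtained. For a fixed collection of classes, the sum is minimised by giving colour $1$ to the largest class, colour $2$ to the next largest, and so on; this is the rearrangement inequality. Hence it suffices to compute, for each $i$, an optimal multiset of class sizes, provided that this multiset is also optimal when merged with the multisets of the other co-components.

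\textbf{$3P_1$-free co-components.} If $G[V(D_i)]$ is $3P_1$-free, every independent set has size at most $2$. A partition into independent sets is then a matching $M$ of $G[V(D_i)]$ (the pairs) together with singletons. Take a maximum matching $M$ of $G[V(D_i)]$, say with $|M|=m_i$, which can be found in polynomial time by Edmonds' algorithm. The class-size sequence $(2^{m_i},1^{|D_i|-2m_i})$ is then dominated: for every $j$, the sum of its $j$ largest class sizes is at least the corresponding sum for any other admissible sequence of $D_i$. For a fixed set of classes, the optimal sum equals $\sum_j j\cdot a_{(j)}=\sum_{j}\bigl(|V|-\sum_{h<j}a_{(h)}\bigr)$, where $a_{(j)}$ is the $j$-th largest class size. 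Because of this formula, a sequence that dominates prefix-wise componentwise remains at least as good after merging with the other co-components. I expect this dominance argument to be the main point needing care.

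\textbf{Disjoint unions of cliques.} If $G[V(D_i)]$ is a disjoint union of cliques $Q_1,\ldots,Q_r$, then, since $D_i$ is a co-component, either $r\geq 2$ or $|V(D_i)|=1$. An independent set picks at most one vertex from each clique. The greedy family is to let class $j$ contain one vertex from every clique of size at least $j$. The size of class $j$ is then $|\{h : |Q_h|\geq j\}|$, and these sizes prefix-dominate those of every other partition. The reason is that any $j$ classes cover at most $\sum_h \min(|Q_h|,j)$ vertices, and the greedy family attains exactly this bound.

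Finally, merge all chosen multisets, sort them in decreasing order, assign colours $1,2,\ldots$ in that order, and compare the resulting sum with $k$. The key lemma to verify is that prefix-dominance is preserved under merging multisets and that the optimal sum is monotone with respect to it; merging is the sorted union, and the prefix sums of a sorted union are maxima over splits, which are monotone. With that lemma in hand, the algorithm runs in polynomial time.
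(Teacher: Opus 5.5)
Your argument is correct once one slip is fixed, and the step that combines the co-components is more careful than in the paper.

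\textbf{The slip.} In the $3P_1$-free case, a colour class of size~$2$ is a pair of \emph{non-adjacent} vertices. So the pairs form a matching of the complement $\overline{G[V(D_i)]}$, not of $G[V(D_i)]$. As written, a maximum matching of $G[V(D_i)]$ would put adjacent vertices into one class. Take $M$ to be a maximum matching of $\overline{G[V(D_i)]}$, as the paper does. Then your claim that $(2^{m_i},1^{|V(D_i)|-2m_i})$ prefix-dominates every admissible profile is correct. (You wrote ``dominated'' but mean ``dominating''.)

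\textbf{Comparison with the paper.} For a single co-component the paper does what you do: a maximum matching in the complement, or colouring each clique with $1,\ldots,|Q|$. It then uses only the \emph{value} of each co-component's chromatic sum and orders the co-components by these values. This amounts to giving each co-component a block of consecutive colours. That does not suffice in general, because classes from different co-components may have to interleave. Consider the join of $K_2+2K_1$ with $2K_1$. It is $(P_3+P_1)$-free, and these two graphs are exactly its co-components. Every block assignment costs at least $11$. However, colouring an independent set of size~$3$ in $K_2+2K_1$ with~$1$, the two vertices of $2K_1$ with~$2$, and the remaining vertex with~$3$ costs $10$.

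Your route rests on three facts:
\begin{enumerate}
\item each co-component has a canonical class-size profile that prefix-dominates all others;
\item the identity $\sum_j j\,a_{(j)}=\sum_{j\geq 1}(|V|-S_{j-1})$, with $S_{j-1}$ the sum of the $j-1$ largest classes, shows the optimal sum is monotone in prefix sums;
\item prefix sums of a merged multiset are maxima over splits, so dominance survives merging.
\end{enumerate}
Together these justify treating the co-components independently and then sorting all classes globally. With the matching taken in the complement, your proof is complete, whereas the paper's final step (ordering co-components by chromatic sum) does not by itself give an optimal colouring.
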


\begin{proof}
    Let $G$ be a $(P_3+P_1)$-free graph. Let $D$ be a co-component of $G$. By Corollary~\ref{c-olariu}, 
    $G[V(D)]$ is $3P_1$-free or a disjoint union of complete graphs.

    First suppose that $G[V(D)]$ is $3P_1$-free. This means that every colour class in every colouring of $G[V(D)]$ has size at most~$2$. Hence, to determine the chromatic sum of $G[V(D)]$, we must maximize the number of colour classes of size~$2$ in a colouring of $G[V(D)]$. We  can do this by computing, in polynomial time~\cite{Ed65}, a maximum matching $M$ in $\overline{G[V(D)]}$: the edges of $M$ form exactly the colour classes of size~$2$.
    
    Now suppose that $G[V(D)]$ is a disjoint union of complete graphs. In this case, any colouring that uses colours $1,\ldots,k$ to colour a connected component with $k$ vertices will give us the chromatic sum. This also takes polynomial time.

    The vertices of any other co-component $D'$ of $G$ are adjacent to all vertices of $D$. As such, the vertices of $D'$ must receive colours not used on $D$. Hence, the above implies that all we need to do is to order the co-components of $G$ as $D_1,\ldots,D_s$ for some $s\geq 1$, such that for every $i\in \{1,\ldots,s-1\}$, the chromatic sum of $G[V(D_i)]$ is at least the chromatic sum of $G[V(D_{i+1})]$. As the ordering takes polynomial time as well, this concludes the proof.
\end{proof}

\section{Proof of Theorem~\ref{t-summary}} \label{s-summary-proof}

For sake of completeness, we give a formal proof of Theorem~\ref{t-summary}.

\SummaryThm*
\begin{proof}
For statement~\ref{t-summary-iso}, we recall that the isomorphism relation and spanning subgraph relation yield the same complexity result as for general graphs. Since {\sc Chromatic Sum} is \NP-complete~\cite{KubickaS89}, {\sc Chromatic Sum} is \NP-complete for ${\cal H}$-isomorphism-free graphs and ${\cal H}$-spanning-subgraph-free graphs for any finite set of graphs~${\cal H}$. 

For the dissolution relation, we recall that the complexity is preserved from that of general graphs if, for any forbidden graph $H$, we can add any constant-sized graph that is not a subdivision of $H$ without changing the complexity. Note that, for any graph $H$, a clique on $|V(H)|+1$ vertices is not a subdivision of $H$. Moreover, the \NP-completeness of {\sc Chromatic Sum} is clearly maintained when adding a clique on $h \geq 4$ vertices by adding $\sum_{i=1}^h i = \binom{h+1}{2}$ to the budget. Hence, for any finite set of graphs~${\cal H}$, {\sc Chromatic Sum} for ${\cal H}$-dissolution-free graphs is \NP-complete.

For statement~\ref{t-summary-minor} and~\ref{t-summary-topo}, we recall that {\sc Chromatic Sum} is polynomial-time solvable on graphs of bounded treewidth~\cite{Ja97} and \NP-complete on planar graphs~\cite{Ma01,HalldorssonK02,MalafiejskiGJK04} and planar subcubic graphs~\cite{Ma01} (see also Theorem~\ref{t-c3}). Hence, {\sc Chromatic Sum} is a C14-problem and a C14'-problem, and we can apply Theorem~\ref{t-dicho2} and~\ref{t-dicho3} respectively.

For statement~\ref{t-summary-sub}, we recall that {\sc Chromatic Sum} is polynomial-time solvable on graphs of bounded treewidth~\cite{Ja97} and \NP-complete on planar subcubic graphs~\cite{Ma01} (see also Theorem~\ref{t-c3}), thus satisfying C1 and C2. We prove in Theorem~\ref{t-c3} that {\sc Chromatic Sum} satisfies C3. Hence, {\sc Chromatic Sum} is a C123-problem and we can apply Theorem~\ref{t-dicho}.

For statement~\ref{t-summary-almost}, we recall that {\sc Chromatic Sum} is \NP-complete on planar graphs~\cite{Ma01, HalldorssonK02,MalafiejskiGJK04} (see also Theorem~\ref{t-c3}), and thus satisfies C4. 
By Theorem~\ref{t-c3}, {\sc Chromatic Sum} satisfies C2 and C3. Hence, it is \NP-complete for $(C_3,C_4,C_5)$-free graphs.
Combining this result shows that {\sc Chromatic Sum} satisfies C6', and thus also C6. Hence, {\sc Chromatic Sum} is a C46-problem, and we can apply Theorem~\ref{t-almost}.

For statement~\ref{t-summary-h}, we recall that Bar-Noy et al.~\cite{Bar-NoyBHST98} proved that {\sc Chromatic Sum} is \NP-complete on line graphs, thus satisfying C8. 
We prove in Theorem~\ref{t-p3p1} that  {\sc Chromatic Sum} is polynomial-time solvable for $(P_3+P_1)$-free graphs. We recall that Jansen~\cite{Ja96} proved that  {\sc Chromatic Sum} is polynomial-time solvable for $P_4$-free graphs. Together, these two results imply that {\sc Chromatic Sum} satisfies C7. As mentioned before, {\sc Chromatic Sum} satisfies C6'. Hence, {\sc Chromatic Sum} is a C6'78-problem and we apply Theorem~\ref{t-fully}.
\end{proof}

\section{Chromatic Sum on Distance-Hereditary Graphs}\label{a-distance}

A graph $G$ is \emph{distance-hereditary} if $G$ can be obtained from a single vertex by the following operations: adding a pendant vertex, creating a true twin, or creating a false twin~\cite{BandeltM86}. Recall that vertices $u,v$ are \emph{true twins} if $N(x)=N(y)$ and $x,y$ are adjacent, and they are \emph{false twins} if $N(x)=N(y)$ and $x,y$ are not adjacent. Distance-hereditary graphs are known to have clique-width at most~$3$~\cite{GolumbicR00}.

\begin{theorem}\label{t-cw}
{\sc Chromatic Sum} is \NP-complete for distance-hereditary graphs, and thus for graphs of clique-width at most~$3$.
\end{theorem}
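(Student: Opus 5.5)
I would prove Theorem~\ref{t-cw} by a reduction that uses the three distance-hereditary operations (pendant vertex, true twin, false twin) to build \emph{forcing gadgets} on top of a cograph skeleton. Cographs alone cannot work, since {\sc Chromatic Sum} is polynomial on $P_4$-free graphs~\cite{Ja96}. The hardness therefore has to come from gadgets that make certain colours expensive for certain vertices. The statement about clique-width then follows immediately, because distance-hereditary graphs have clique-width at most~$3$~\cite{GolumbicR00}. Membership in \NP\ is trivial.

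The basic gadget is as follows. For a vertex $v$ and an integer $j$, attach a pendant vertex $u$ to $v$ and create $j-1$ true twins of $u$. This yields a clique $K^v_j$ whose vertices are all adjacent to $v$ and to nothing else. In an optimal colouring, $K^v_j$ wants colours $1,\ldots,j$, for a cost of $\binom{j+1}{2}$. If $c(v)\le j$, the clique is forced to pay at least one more unit. Making $M$ false twins of the pendant structure is not available directly, so I would instead attach $M$ disjoint copies of the gadget, created by repeating the pendant/true-twin operations on $v$ $M$ times. This raises the penalty for $c(v)\le j$ to $M$, with $M$ larger than any possible saving elsewhere. The effect is a soft lower bound $c(v)\ge j+1$. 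Upper bounds on colours will come from the global budget $k'$.

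The source problem I would choose is a list-type colouring problem on a cograph in which every list is an interval $\{j_v+1,\ldots,m\}$ or can be simulated by such intervals. One concrete candidate is scheduling unit-time jobs with release dates under a conflict structure that is a disjoint union of cliques joined together, reduced from {\sc 3-Partition}. The skeleton consists of $m$ ``bins'' realised as cliques. The skeleton vertices get lower bounds through the gadgets, and the skeleton is built with true/false twins so that it stays a cograph. I would set $k'$ to equal the total forced gadget cost plus exactly the sum attainable when the skeleton colouring packs perfectly.

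The main obstacle is the converse direction of the cost accounting. I must show that an optimal colouring never prefers to violate a lower bound, even though a violation only costs one unit per gadget copy. The multiplicity $M$ handles this, but it must stay polynomial. I also must show that the skeleton cannot use colours above the intended range without exceeding $k'$. I would first try to establish this by an exchange argument similar to Claim~\ref{clm:subset}. That argument would recolour any gadget clique to $\{1,\ldots,j\}$ and bound the net change. If interval-type lists turn out to be too weak for the chosen source problem, my fallback is to reduce instead from {\sc Colouring} (or its precolouring version) on a class where it is known to be hard. In that approach, pendant-based gadgets would simulate precoloured vertices, mirroring the budget argument in the proof of Theorem~\ref{t-kr}.
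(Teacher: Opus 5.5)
\paragraph{Verdict: there is a genuine gap.} Your lower-bound gadget is correct, and it is exactly the gadget the paper uses in its last step. A pendant vertex at $v$, blown up by true twins into a clique of size $\ell(v)-1$ and repeated $k+1$ times, forces $c(v)\ge \ell(v)$ at a penalty exceeding the budget slack. The gap is everything that sits on top of this gadget.

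Your main route never specifies the hard source. The {\sc 3-Partition}/release-date scheduling on a cograph skeleton is only named as a ``candidate''. There is no skeleton, no lower-bound values, no $k'$, and no argument in either direction. Yet in your route, \NP-hardness of lower-bounded {\sc Chromatic Sum} on such skeletons is the whole content of the theorem.

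The mechanism you rely on for upper bounds, the global budget $k'$, does not constrain individual vertices. It only bounds the total, so a colouring can push one vertex above its intended range while saving elsewhere. The same defect sinks your fallback as stated:
\begin{itemize}
\item With lower-bound gadgets alone you can force $c(v)\ge c'(v)$, but not $c(v)\le c'(v)$.
\item The sum of a valid extension is not known in advance, so $k'$ can only be a crude bound (such as $|V(G)|\cdot k$ on the part coming from $G$). A crude total cannot pin single vertices.
\item Plain {\sc Colouring} cannot serve as the source. Your gadgets keep $G$ as an induced subgraph, so $G$ would itself have to be distance-hereditary. There {\sc Colouring} is polynomial, since the clique-width is bounded~\cite{KR03}.
\end{itemize}

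\paragraph{The missing idea: a colour-blocking gadget.} What you need is a per-vertex \emph{colour-blocking} gadget, and you can build it from your own lower-bound gadget.
\begin{itemize}
\item To forbid colour $i$ at $v$, attach $X$ pendant vertices $w$ to $v$. Give each $w$ a lower bound $i$ by hanging copies of a clique of size $i-1$ on it. The graph stays distance-hereditary: these are pendants of pendants, plus true twins.
\item Each such $w$ is cheapest exactly at colour $i$. So $c(v)=i$ costs at least one extra unit per copy, i.e.\ at least $X$ in total.
\item Do this for all $i\in\{\mu(v)+1,\ldots,X\}$ with $X=|V(G)|\cdot k+1$. Colours $\ge X$ on $V(G)$ are already excluded by the budget, so this yields a hard cap $c(v)\le\mu(v)$.
\item Set $\ell(v)=\mu(v)=c'(v)$ on precoloured vertices, and $\ell(v)=1$, $\mu(v)=k$ on the others. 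This simulates {\sc Precolouring Extension}, which is \NP-complete on distance-hereditary graphs~\cite{BonomoDM09}.
\end{itemize}
This is precisely the paper's proof. It reduces {\sc Precolouring Extension} $\to$ {\sc Chromatic Sum with Lower Bounds} via the blocking pendants, and then $\to$ {\sc Chromatic Sum} via your clique gadget, all within distance-hereditary graphs. No cograph skeleton or scheduling reduction is needed.
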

\begin{proof}
We first show that an intermediate problem is \NP-complete for distance-hereditary graphs. The {\sc Chromatic Sum with Lower Bounds} problem asks, given a graph $G$, a function $\ell:V(G) \rightarrow \{1,\ldots,|V(G)|\}$, and an integer $k$, whether $G$ admits a colouring $d$ with sum at most $k$ such that $d(v) \geq \ell(v)$ for each $v \in V(G)$. We prove the following, using a reduction that is essentially the same as one of Bodlaender et al.~\cite{BodlaenderHL25}.

\begin{claim}
{\sc Chromatic Sum with Lower Bounds} is \NP-complete for distance-hereditary graphs.
\end{claim}
\begin{claimproof}
We reduce from {\sc Precolouring Extension}, which asks, given a graph $G$, an integer $k$, a set $V_0 \subseteq V(G)$, and a function $c' : V_0 \rightarrow \{1,\ldots,k\}$, whether $G$ admits a colouring $c : V(G) \rightarrow \{1,\ldots,k\}$ such that $c(v) = c'(v)$ for each $v \in V_0$. It is known that {\sc Precolouring Extension} is \NP-complete for distance-hereditary graphs~\cite{BonomoDM09}. 

Let $(G,k,V_0,c')$ be an instance of {\sc Precolouring Extension} such that $G$ is a distance-hereditary graph. Without loss of generality, we may assume that $k \leq |V(G)|$. Let $X = |V(G)| \cdot k+1$. Let $\mu(v) = c'(v)$ if $v \in V_0$ and let $\mu(v) = |V(G)|$ otherwise; the intuition is that $\mu$ forms an upper bound on the colour to be assigned to $v$. Construct a graph from $G$ as follows. For every $v \in V(G)$ and every $i \in \{\mu(v)+1,\ldots,X\}$, add vertices $v_{i,1},\ldots,v_{i,X}$ to $G$ and make each adjacent to $v$. The resulting graph $G'$ is distance-hereditary, as the added vertices are pendant vertices. Now define $\ell:V(G') \rightarrow \{1,\ldots,X\}$ as follows: for every $v \in V(G)$, set $\ell(v) = c'(v)$ if $v \in V_0$ and $\ell(v) = 1$ otherwise; for every $v \in V(G)$ and every $i \in \{\mu(v)+1,\ldots,X\}$, set $\ell(v_{i,1})=\cdots=\ell(v_{i,X})=i$. Finally, let $k' = |V(G)| \cdot k + X \cdot \sum_{v \in V(G)} \sum_{i=\mu(v)+1}^{X} i$. Since $\ell(v) \leq X \leq |V(G')|$ for every $v \in V(G')$, $(G',\ell,k')$ is a valid instance of {\sc Chromatic Sum with Lower Bounds}. It also has polynomially bounded size, since $k \leq |V(G)|$.

Suppose that $(G,k,V_0,c')$ is a yes-instance of {\sc Precolouring Extension}. Let $c : V(G) \rightarrow \{1,\ldots,k\}$ be a colouring of $G$ such that $c(v) = c'(v)$ for each $v \in V_0$. Define $d : V(G') \rightarrow \{1,\ldots,|V(G'|\}$ as follows: for every $v \in V(G)$, set $d(v) = c(v)$; for every $v \in V(G)$ and every $i \in \{\mu(v)+1,\ldots,X\}$, set $d(v_{i,1})=\cdots=d(v_{i,X})=i$. As $d(v) = c(v) \leq \mu(v)$ for each $v \in V(G)$, $d$ is a valid colouring of $G'$. Also, for every $v \in V(G')$ it holds that $d(v) \geq \ell(v)$ by definition. Moreover, every $v \in V(G)$ contributes at most $k$ to the sum of $d$, while for every $v \in V(G)$ and every $i \in \{\mu(v)+1,\ldots,X\}$, the vertices $v_{i,1},\ldots,v_{i,X}$ contribute $iX$ to the sum of $d$. Hence, the sum of $d$ is at most $k'$, and $(G',\ell,k')$ is a yes-instance of {\sc Chromatic Sum with Lower Bounds}.

Suppose that $(G',\ell,k')$ is a yes-instance of {\sc Chromatic Sum with Lower Bounds}. Let $d$ be a colouring of $G'$ with sum at most $k'$. Observe that by the definition of $\ell$, the vertices of $V(G') \setminus V(G)$ contribute at least $X \cdot \sum_{v \in V(G)} \sum_{i=\mu(v)+1}^{X} i$ to the sum of $d$. If $d(v) \geq X$ for some $v \in V(G)$, then the sum of $d$ is larger than $k'$, a contradiction. Hence, $d(v) < X$ for every $v \in V(G)$. Suppose that $d(v) > \mu(v)$ for some $v \in V(G)$. Then by the definition of $\ell$ and the fact that $v_{d(v),1},\ldots,v_{d(v),X}$ exist (by $d(v)<X$) and are adjacent to $v$, it holds that $d(v_{d(v),1}),\ldots,d(v_{d(v),X}) > d(v)$. Then these vertices contribute an extra $X$ to the sum of $d$ and the vertices of $V(G') \setminus V(G)$ contribute at least $X + X \cdot \sum_{v \in V(G)} \sum_{i=\mu(v)+1}^{X} i > k'$ to the sum of $d$, a contradiction. Hence, $d(v) \leq \mu(v)$ for every $v \in V(G)$. Moreover, by the definition of $\ell$ and $\mu$, $d(v) = c'(v)$ for every $v \in V_0$. Hence, the restriction of $d$ to $V(G)$ demonstrates that $(G,k,V_0,c')$ is a yes-instance of {\sc Precolouring Extension}.
\end{claimproof}
We now prove the hardness of {\sc Chromatic Sum} on distance-hereditary graphs. Let $(G,\ell,k)$ be an instance of {\sc Chromatic Sum with Lower Bounds} such that $G$ is a distance-hereditary graph. Without loss of generality, we may assume that $k \leq 2\cdot |V(G)|^2$. For every $v \in V(G)$, add $k+1$ cliques $C_{v,1},\ldots,C_{v,k+1}$ of size $\ell(v)-1$ each to $G$ and make each adjacent to $v$. The resulting graph $G'$ is distance-hereditary, as the added vertices are true twins of pendant vertices of the vertices of $G$. Set $k' = k + (k+1) \cdot \sum_{v\in V(G)} (\ell(v)-1)$. Then $(G',k')$ is the constructed instance of {\sc Chromatic Sum}. This instance has polynomially bounded size, since $k \leq 2\cdot |V(G)|^2$ and $\ell(v) \leq |V(G)|$ for every $v \in V(G)$.

Suppose that $(G,\ell,k)$ is a yes-instance of {\sc Chromatic Sum with Lower Bounds}. Let $d$ be a colouring of $G$ of sum at most $k$ such that $d(v) \geq \ell(v)$ for every $v \in V(G)$. Let $c$ be the extension of $d$ by, for every $v \in V(G)$, colouring the vertices of each clique $C_{v,1},\ldots,C_{v,k+1}$ with colours $1,\ldots,\ell(v)-1$. These vertices contribute $(k+1) \cdot \sum_{v\in V(G)} (\ell(v)-1)$ to the sum of $c$. Hence, the sum of $c$ is at most $k'$, and $(G',k')$ is a yes-instance of {\sc Chromatic Sum}.

Suppose that $(G',k')$ is a yes-instance of {\sc Chromatic Sum}. Let $c$ be a colouring of $G'$ of sum at most $k'$. The vertices in the added cliques contribute at least $(k+1) \cdot \sum_{v\in V(G)} (\ell(v)-1)$ to the sum of $c$, because the cliques have size $\ell(v)-1$ each for every $v \in V(G)$. Suppose that $c(v) < \ell(v)$ for some $v \in V(G)$. Then in every clique among $C_{v,1},\ldots,C_{v,k+1}$, the colour $c(v)$ cannot be used and some vertex in each clique must be assigned a colour at least $\ell(v)$. Then these vertices contribute an extra $k+1$ to the sum of $d$ and the vertices of the added cliques contribute at least $k + 1 + (k+1) \cdot \sum_{v\in V(G)} (\ell(v)-1) > k'$ to the sum of $d$, a contradiction. Hence, $c(v) \geq \ell(v)$ for every $v \in V(G)$. Hence, the restriction of $c$ to $V(G)$ demonstrates that $(G,\ell,k)$ is a yes-instance of {\sc Chromatic Sum with Lower Bounds}. This completes the reduction.
\end{proof}

\section{Conclusions}\label{s-con}

We formalized several frameworks for graph containment and determined the computational complexity of {\sc Chromatic Sum} for graph classes defined by a set of forbidden subgraphs (see Theorem~\ref{t-summary}). In particular, we classified the complexity of {\sc Chromatic Sum} for
${\cal H}$-subgraph-free graphs for every set of finite graphs~${\cal H}$, and for $H$-free graphs for every graph~$H$. The situation where ${\cal H}$ is an arbitrary infinite set of forbidden subgraphs is still far from clear.
Similarly, we do not know the complexity of {\sc Chromatic Sum} for $(H_1,H_2)$-free graphs.
We leave these problems for future work, but we note that the classification for {\sc Colouring} on $(H_1,H_2)$-free graphs is also not yet resolved despite many partial results (see e.g.~\cite{GJPS17}). 

On a side note, even though the complexities of {\sc Chromatic Sum} and {\sc Colouring} 
are C6'78-problems, and thus their complexities 
coincide on $H$-free graphs, this is no longer the case if more than one induced subgraph is forbidden. 
For example, {\sc Colouring} is polynomial-time solvable for cubic graphs due to Brooks' Theorem, whereas {\sc Chromatic Sum} is \NP-complete for cubic graphs~\cite{Ma01} (see also Theorem~\ref{t-c3}). As another example, {\sc Colouring} is solvable in polynomial time on graphs of bounded cliquewidth~\cite{KR03}, while {\sc Chromatic Sum} is \NP-complete for graphs of clique-width~$3$
(Theorem~\ref{t-cliquewidth}). We also note that, while Theorem~\ref{t-summary} fully determines the complexity of {\sc Chromatic Sum} for $H$-subgraph-free graphs for any $H$, the complexity classification of {\sc Colouring} for $H$-subgraph-free graphs is still incomplete (see~\cite{EJLMP26,GPR15,JMPPSV23} for partial results).

We also note that the list of known C6'78-problems from Section~\ref{s-intro} consists of colouring problems only. It would be interesting to identify other types of C6'78-problems.

We also showed that there only exists a finite set of graphs $H$ for which the complexity of {\sc Chromatic Sum} is open on $H$-induced-minor-free graphs, $H$-topological-minor-free graphs, and $H$-contraction-free graphs. All these graphs $H$ are planar $4P_1$-free.
We note that {\sc Chromatic Sum} is \NP-complete on split graphs~\cite{Salavatipour00,Salavatipour2003}, and observe that split graphs are closed under edge contraction and vertex deletion. Hence, we may even assume that all the remaining cases involve a planar $4P_1$-free split graph.
Some of these cases might be solved by making use of existing characterizations. However, for several other cases of graphs~$H$, additional structural research is needed. We leave this for future research as well.

There also exist problems that are neither C14 nor C46'7, but that are still fully classified on $H$-free graphs. For example,
{\sc b-Chromatic Number} is polynomial-time solvable if $H\ssi P_4$ and \NP-hard otherwise~\cite{AELMMP26} (interestingly, it is not known if {\sc b-Chromatic Number} is a C46-problem, as its complexity on planar graphs is a well-known open problem). 
As another example,
{\sc Longest Path Contraction} is polynomial-time solvable on $H$-free graphs if $H\ssi P_2+P_4$, $P_1+P_2+P_3$, $P_1+P_5$ or $sP_1+P_4$ for some $s\geq 0$, and \NP-complete otherwise~\cite{KP20}. In fact, many other classical problems have different characterizations on $H$-free graphs, such as {\sc Independent Set}~\cite{GLMPPR24}, {\sc Matching Cut}~\cite{LMO25}, and {\sc Efficient Domination}~\cite{BM16,LPV18}.
It would be interesting to detect new conditions for meta-classifications.

\bibliography{main}

\end{document}